%
\documentclass[12pt, reqno]{amsart}
\usepackage{amsmath, amsthm, amscd, amsfonts, amssymb, graphicx, color}
\usepackage[bookmarksnumbered, colorlinks, plainpages]{hyperref}

\textheight 22.5truecm \textwidth 15.5truecm
\setlength{\oddsidemargin}{0.05in}\setlength{\evensidemargin}{0.05in}

\setlength{\topmargin}{-.5cm}

\newtheorem{theorem}{Theorem}[section]
\newtheorem{lemma}[theorem]{Lemma}

\newtheorem{corollary}[theorem]{Corollary}
\theoremstyle{definition}

\newtheorem{example}[theorem]{Example}

\theoremstyle{remark}
\newtheorem{remark}[theorem]{Remark}
\numberwithin{equation}{section}
\input{mathrsfs.sty}
\begin{document}

\title{Non-commutative $f$-divergence functional}

\author[M.S. Moslehian, M. Kian]{Mohammad Sal Moslehian and Mohsen Kian}

\address{Department of Pure Mathematics, Center of Excellence in Analysis on Algebraic Structures (CEAAS), Ferdowsi University of Mashhad, P.O. Box 1159, Mashhad 91775, Iran.}
\email{moslehian@um.ac.ir and moslehian@member.ams.org}
\email{kian$_{-}$tak@yahoo.com}

\subjclass[2010]{47A63, 46L05, 26D15, 15A60, 60E15.}

\keywords{Information theory; Kullback--Leibler distance;
f-divergence functional; Csisz\'{a}r's result; perspective function;
operator convex}

\begin{abstract}
We introduce the non-commutative $f$-divergence functional
$\Theta(\widetilde{A},\widetilde{B}):=\int_TB_t^{\frac{1}{2}}f\left(B_t^{-\frac{1}{2}}
A_tB_t^{-\frac{1}{2}}\right)B_t^{\frac{1}{2}}d\mu(t)$ for an
operator convex function $f$, where $\widetilde{A}=(A_t)_{t\in T}$
and $\widetilde{B}=(B_t)_{t\in T}$ are continuous fields of Hilbert
space operators and study its properties. We establish some
relations between the perspective of an operator convex function $f$
and the non-commutative $f$-divergence functional. In particular, an
operator extension of Csisz\'{a}r's result regarding $f$-divergence
functional is presented. As some applications, we establish a
refinement of the Choi--Davis--Jensen operator inequality, obtain
some unitarily invariant norm inequalities and give some results
related to the Kullback--Leibler distance.
\end{abstract}

\maketitle

\section{Introduction and Preliminaries}
\noindent

Let $\mathbb{B}(\mathscr{H})$ be the algebra of all bounded linear
operators on a complex Hilbert space $\mathscr{H}$ and $I$ denote
the identity operator. If $\dim\mathscr{H}=n$, we identify
$\mathbb{B}(\mathscr{H})$ with the algebra
$\mathcal{M}_n(\mathbb{C})$ of all $n\times n$ matrices with entries
in the complex number field $\mathbb{C}$. An operator $A$ is said to
be positive (denoted by $A\geq 0$) if $\langle Ax,x\rangle\geq 0$
for all vectors $x\in\mathscr{H}$. If, in addition, $A$ is
invertible, then it is called strictly positive (denoted by $A> 0$).
By $A\geq B$ we mean that $A-B$ is positive, while $A>B$ means that
$A-B$ is strictly positive. A map $\Phi$ on
$\mathbb{B}(\mathscr{H})$ is called positive if $\Phi(A)\geq0$ for
each $A\geq0$. An operator $C$ is called an isometry if $C^*C=I$, a
contraction if $C^*C\leq I$ and an expansive operator if $C^*C\geq
I$.

A continuous real valued function $f$ defined on an interval $J$ is
said to be operator convex if
\begin{eqnarray*}
 f(\lambda A+(1-\lambda)B)\leq \lambda f(A)+(1-\lambda)f(B),
\end{eqnarray*}
for all self-adjoint operators $A,B$ with spectra contained in $J$
and any $\lambda\in[0,1]$. If $-f$ is operator convex, then $f$ is
said to be operator concave. Let $J_1$ and $J_2$ be two real
intervals. A jointly operator convex function is a function $f$
defined on $J_1\times J_2$ such that
 \begin{eqnarray*}
 f(\lambda (A,B)+(1-\lambda)(C,D))\leq \lambda f(A,B)+(1-\lambda)f(C,D),
 \end{eqnarray*}
 for all self-adjoint operators $A,C$ with spectra contained in $J_1$, all self-adjoint operators $B,D$ with spectra contained in $J_2$ and all $\lambda\in[0,1]$; see e.g. \cite{MA} for the definition of $f(A,B)$.

The Jensen operator inequality, due to Hansen and Pedersen states
that $f:J\to\mathbb{R}$ is operator convex if and only if
\begin{eqnarray}\label{hpj}
 f(C^*AC)\leq C^*f(A)C,
\end{eqnarray}
for any isometry $C$ and any self-adjoint operator $A$ with spectrum
contained in $J$, see \cite{Fu} for various equivalent assertions.
If $0 \in J$ and $f(0)\leq 0$, then $f$ is operator convex on $J$ if
and only if \eqref{hpj} holds for any contraction $C$. Some other
various characterizations of operator convexity can be found in
\cite[Chapter 1]{Fu}; see also \cite{MOS2, MOS1} and references
therein.

The Choi--Davis--Jensen inequality states that if $f$ is operator
convex, then
\begin{eqnarray}\label{choi}
 f(\Phi(A))\leq\Phi(f(A)),
\end{eqnarray}
for any unital positive linear map $\Phi$ and any self-adjoint operator $A$, whose spectrum is contained in the domain of $f$; see \cite{PET} for a characterization for the case of equality. An extension of this significant inequality reads as follows.\\
\textbf{Theorem A.} \cite{Mo} \textit{Let $f$ be an operator convex
function on $J$ and $\Phi_1,\cdots,\Phi_n$ be positive linear maps
on $\mathbb{B}(\mathscr{H})$ with $\sum_{i=1}^{n}\Phi_i(I)=I$. Then
\begin{eqnarray}\label{jo}
 f\left(\sum_{i=1}^{n}\Phi_i(A_i)\right)\leq \sum_{i=1}^{n}\Phi_i(f(A_i)),
\end{eqnarray}
for all self-adjoint operators $A_i$ \ $(i=1,\cdots,n)$ with spectra
contained in $J$.} In particular,
\begin{eqnarray}\label{multij}
 f\left(\sum_{i=1}^{n}C_i^*A_iC_i\right)\leq\sum_{i=1}^{n}C_i^*f(A_i)C_i,
\end{eqnarray}
whenever $\sum_{i=1}^{n}C_i^*C_i=I$.

Let $T$ be a locally compact Hausdorff space and $\mathfrak{A}$ be a
$C^*$-algebra of Hilbert space operators. A field $(A_t)_{t\in T}$
of operators in $\mathfrak{A}$ is said to be continuous if the
function $t\mapsto A_t$ is norm continuous on $T$. Moreover, If
$\mu$ is a Radon measure on $T$ and the function $t\mapsto A_t$ is
integrable on $T$, then the Bochner integral $\int_TA_td\mu(t)$ is
defined to be the unique element of $\mathfrak{A}$ with the property
that
\begin{eqnarray*}
\rho\left(\int_TA_td\mu(t)\right)=\int_T\rho(A_t)d\mu(t),
\end{eqnarray*}
for any linear functional $\rho$ in the norm dual $\mathfrak{A}^*$
of $\mathfrak{A}$.

Furthermore, let $\mathfrak{A}$ and $\mathfrak{B}$ be $C^*$-algebras
of operators. A field $(\Phi_t)_{t\in T}:\mathfrak{A}\to
\mathfrak{B}$ of positive linear maps is said to be continuous if
the function $t\mapsto\Phi_t(A)$ is continuous on $T$ for every
$A\in\mathfrak{A}$. If the $C^*$-algebras $\mathfrak{A}$ and
$\mathfrak{B}$ are unital and the function $t\mapsto \Phi_t(I)$ is
integrable on $T$ with integral $I$, then we say that the field
$(\Phi_t)_{t\in T}$ is unital.

The following result, is the Jensen operator inequality for continuous fields of operators.\\
\textbf{Theorem B.} \cite{HPP} \textit{Let $f$ be an operator convex
function defined on an interval $J$, and let $\mathfrak{A}$ and
$\mathfrak{B}$ be unital $C^*$-algebras. If $(\Phi_t)_{t\in
T}:\mathfrak{A}\to \mathfrak{B}$ is a unital field of positive
linear maps defined on a locally compact Hausdorff space $T$ with a
bounded Radon measure $\mu$, then
\begin{eqnarray}\label{ji}
 f\left(\int_T\Phi_t(A_t)d\mu(t)\right)\leq\int_T\Phi_t(f(A_t))d\mu(t),
\end{eqnarray}
for every norm bounded continuous field $(A_t)_{t\in T}$ of
self-adjoint operators in $\mathfrak{A}$ with spectra contained in
$J$.}

Let $f$ be a convex function on a convex set $K\subseteq\mathbb{R}$.
Following \cite{H}, the perspective function $g$ associated to $f$
is defined on the set $\{(x,y):y>0 \quad
\mbox{and}\quad\frac{x}{y}\in K\}$ by
\begin{eqnarray*}
 g(x,y):=yf\left(\frac{x}{y}\right).
\end{eqnarray*}

As an operator extension of the perspective function, Effros
\cite{E} introduced the perspective function of an operator convex
function $f$ by
\begin{eqnarray*}
 g(L,R):=Rf\left(\frac{L}{R}\right),
\end{eqnarray*}
for commuting strictly positive operators $L$ and $R$ and proved the following notable theorem.\\
\textbf{Theorem C.} \cite{E} \label{Ef1}
 \textit{If $f$ is operator convex, when restricted to the commuting strictly positive operators, then the perspective function $(L,R)\mapsto g(L,R)=Rf\left(\frac{L}{R}\right)$ is jointly operator convex.}

 He also extended the generalized perspective function, defined by Mar\'echal \cite{M,M2} to operators.
Given continuous functions $f$ and $h$ and commuting strictly
positive operators $L$ and $R$, Effros defined the operator
extension of the generalized perspective function by
\begin{eqnarray*}
 (f\Delta h)(L,R):=h(R)f\left(\frac{L}{h(R)}\right),
\end{eqnarray*}
and proved the following assertion.\\
\textbf{Theorem D.} \label{Ef2} \textit{If $f$ is operator convex
with $f(0)\leq0$ and $h$ is operator concave with $h>0$, then
$f\Delta h$ is jointly convex on commuting strictly positive
operators.}

The authors of \cite{Eb} extended Effros results by removing the
restriction to commuting operators and proved analogue results to
Theorem C and Theorem D.

An interesting study of such functions for operators was introduced
by Kubo and Ando. They considered the case where $f$ is an operator
monotone function and established a relation between the operator
monotone functions and the operator means (see \cite[Chapter
5]{Fu}).

One of the most principal matters in applications of probability
theory is to find a suitable measure between two probability
distributions. Many kinds of such measures have been studied and
applied in several fields such as signal processing, genetics and
economics. One of the most famous of such measures is the
Csisz\'{a}r $f$-divergence functional, which includes several
measures.

For a convex function $f:[0,\infty)\to\mathbb{R}$, Csisz\'{a}r
\cite{C,C2} introduced the $f$-divergence functional by
\begin{eqnarray}\label{def22}
 I_f(\widetilde{p},\widetilde{q}):=\sum_{i=1}^{n}q_if\left(\frac{p_i}{q_i}\right),
\end{eqnarray}
for positive $n$-tuples $\widetilde{p}=(p_1,\cdots,p_n)$ and
$\widetilde{q}=(q_1,\cdots,q_n)$, in which undefined expressions
were interpreted by
\begin{eqnarray*}
 f(0)=\lim_{t\to 0^+}f(t), \qquad 0f\left(\frac{0}{0}\right)=0, \qquad
 0f\left(\frac{p}{0}\right)=\lim_{\epsilon\to 0^+}f\left(\frac{p}{\epsilon}\right)=p\lim_{t\to \infty}\frac{f(t)}{t}.
 \end{eqnarray*}
Also Csisz\'{a}r and K\"{o}rner \cite{CK} obtained the following result.\\

\textbf{Theorem E.} \textit{If $f:[0,\infty)\to\mathbb{R}$ is
convex, then $I_f(\widetilde{p},\widetilde{q})$ is jointly convex in
$\widetilde{p}$ and $\widetilde{q}$ and
 \begin{eqnarray}\label{m1}
 \sum_{i=1}^{n}q_if\left(\frac{\sum_{i=1}^{n}p_i}{\sum_{i=1}^{n}q_i}\right)\leq I_f(\widetilde{p},\widetilde{q})
 \end{eqnarray}
 for all positive $n$-tuples $\widetilde{p}=(p_1,\cdots,p_n), \widetilde{q}=(q_1,\cdots,q_n)$.}

A series of results and inequalities related to $f$-divergence
functionals can be found in \cite{A,CD, DK,H2}.

The paper is organized as follows. In section 2, we introduce an
operator extension of $f$-divergence functional for an operator
convex function $f$, which is more general than the perspective
function associated to $f$. We give some properties of our
non-commutative $f$-divergence functional and establish its
relationship to the perspective of $f$. In particular, an operator
extension of \eqref{m1} is presented. In section 3, we provide some
applications for our results. More precisely, a refinement of the
Choi--Davis--Jensen operator inequality is obtained, some unitarily
invariant norm inequalities are presented and some results related
to the Kullback--Leibler distance are given.

\section{ Non-commutative $f$-divergence functionals}
Throughout this section, assume that $T$ is a locally compact
Hausdorff space with a bounded Radon measure $\mu$ and
$\mathfrak{A}$ and $\mathfrak{B}$ are $C^*$-algebras of Hilbert
space operators. Assume that $\widetilde{A}=(A_t)_{t\in T}$ and
$\widetilde{B}=(B_t)_{t\in T}$ are continuous fields of self-adjoint
and strictly positive operators in $\mathfrak{A}$, respectively, and
$(\Phi_t)_{t\in T}:\mathfrak{A}\to\mathfrak{B}$ is a unital field of
positive linear maps.
 Furthermore, when $T$ is the finite set $\{1,\cdots,n\}$ and $\mu$ is the counting measure, suppose that $\widetilde{L}=(L_1,\cdots,L_n)$ and $\widetilde{R}=(R_1,\cdots,R_n)$ are $n$-tuples of self-adjoint and strictly positive operators on a Hilbert space
 $\mathscr{H}$, respectively, and $(\Phi_1,\cdots,\Phi_n)$ is an $n$-tuple of positive linear maps on $\mathbb{B}(\mathscr{H})$.

 Let $f:[0,\infty)\to\mathbb{R}$ be a convex function. The perspective function $g$ associated to $f$ is defined by
\begin{eqnarray*}
 g(L,R):=R^{\frac{1}{2}}f(R^{-\frac{1}{2}}LR^{-\frac{1}{2}})R^{\frac{1}{2}},
\end{eqnarray*}
 where $L$ is a self-adjoint operator and $R$ is a strictly positive operator on a Hilbert space $\mathscr{H}$.
In \cite{Eb} it is shown that $f$ is operator convex if and only if
$g$ is jointly operator convex. We consider a more general case. Let
us define the non-commutative $f$-divergence functional $\Theta$ by
\begin{eqnarray}\label{def1}
 \Theta(\widetilde{A},\widetilde{B}):=\int_TB_t^{\frac{1}{2}}f\left(B_t^{-\frac{1}{2}}
 A_tB_t^{-\frac{1}{2}}\right)B_t^{\frac{1}{2}}d\mu(t).
\end{eqnarray}
 Hence, in the discrete case $\Theta$ is defined by
 \begin{eqnarray}\label{def222}
 \Theta(\widetilde{L},\widetilde{R})=\sum_{i=1}^{n}R_i^{\frac{1}{2}}f(R_i^{-\frac{1}{2}}L_iR_i^{-\frac{1}{2}})R_i^{\frac{1}{2}}.
\end{eqnarray}
 By the same argument as in \cite{Eb}, it is easy to see that $\Theta$ is jointly operator convex if and only if $f$ is operator convex.
In the sequel, we study some properties of $\Theta$ and establish
some relations between $\Theta$ and $g$. The following result is an
extension of \eqref{m1}.

\begin{theorem}\label{th2}
 Let $f$ be an operator convex function, and $g$ be the corresponding perspective function. Then
 \begin{eqnarray}\label{q2}
 g(A,B)\leq\Theta(\widetilde{A},\widetilde{B}),
 \end{eqnarray}
 where $A=\int_TA_td\mu(t)$ and $B=\int_TB_td\mu(t)$.
\end{theorem}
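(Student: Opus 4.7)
The plan is to reduce \eqref{q2} to the continuous Jensen operator inequality (Theorem B) via a change of variables that replaces the outer $B^{1/2}$-sandwich by a unital field of positive linear maps. First I would multiply \eqref{q2} on the left and right by $B^{-1/2}$, which exists because $B=\int_T B_t\,d\mu(t)$ is strictly positive. The inequality becomes
\begin{equation*}
f\!\left(B^{-1/2}AB^{-1/2}\right)\leq B^{-1/2}\,\Theta(\widetilde{A},\widetilde{B})\,B^{-1/2}.
\end{equation*}

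Next I would introduce $Y_t:=B_t^{-1/2}A_tB_t^{-1/2}$ and $C_t:=B^{-1/2}B_t^{1/2}$. A direct calculation from $A_t=B_t^{1/2}Y_tB_t^{1/2}$ gives $B^{-1/2}A_tB^{-1/2}=C_t^*Y_tC_t$, and integrating over $T$ yields
\begin{equation*}
B^{-1/2}AB^{-1/2}=\int_T C_t^*Y_tC_t\,d\mu(t),\qquad \int_T C_t^*C_t\,d\mu(t)=B^{-1/2}BB^{-1/2}=I.
\end{equation*}
Setting $\Psi_t(X):=C_t^*XC_t$ therefore produces a unital continuous field of positive linear maps, and Theorem B applied to $(\Psi_t)_{t\in T}$ and $(Y_t)_{t\in T}$ delivers
\begin{equation*}
f\!\left(\int_T \Psi_t(Y_t)\,d\mu(t)\right)\leq \int_T \Psi_t\bigl(f(Y_t)\bigr)\,d\mu(t),
\end{equation*}
which is exactly the reformulated \eqref{q2}. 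Sandwiching back by $B^{1/2}$ on both sides recovers the original inequality.

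The main obstacle is bookkeeping rather than a conceptual difficulty: one must verify norm continuity of $t\mapsto C_t$ (which follows from continuity of the functional calculus on the strictly positive continuous field $B_t$), that $(\Psi_t)_{t\in T}$ is indeed a continuous unital field of positive linear maps, and that the spectra of $Y_t$ lie in the domain of $f$. Each of these is a routine consequence of the standing hypotheses of the section. The only genuinely conceptual point is the identity $B^{-1/2}A_tB^{-1/2}=C_t^*Y_tC_t$, which is what converts the $B_t$-perspective of $A_t$ into a standard Jensen-type summand; once it is noted, Theorem B carries the rest of the argument.
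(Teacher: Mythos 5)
Your argument is essentially the paper's own proof: the authors likewise reduce \eqref{q2} to $f\bigl(B^{-1/2}AB^{-1/2}\bigr)\leq B^{-1/2}\Theta(\widetilde{A},\widetilde{B})B^{-1/2}$, rewrite $B^{-1/2}A_tB^{-1/2}$ as a congruence of $B_t^{-1/2}A_tB_t^{-1/2}$, and invoke the continuous Jensen operator inequality \eqref{ji} for the resulting unital field. The only blemish is a transposed adjoint: with your $C_t=B^{-1/2}B_t^{1/2}$ the correct identities are $B^{-1/2}A_tB^{-1/2}=C_tY_tC_t^{*}$ and $\int_T C_tC_t^{*}\,d\mu(t)=I$ (equivalently, set $C_t:=B_t^{1/2}B^{-1/2}$ and keep your formulas as written); this does not affect the argument.
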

\begin{proof}
\begin{align*}
 f&\left(B^{-\frac{1}{2}}AB^{-\frac{1}{2}}\right)\\
 & =
 f\left(\left(\int_TB_sd\mu(s)\right)^{-\frac{1}{2}}\int_TA_td\mu(t)
\left(\int_TB_sd\mu(s)\right)^{-\frac{1}{2}}\right)\\
&=
 f\left(\int_T\left(\int_TB_sd\mu(s)\right)^{-\frac{1}{2}}A_t
\left(\int_TB_sd\mu(s)\right)^{-\frac{1}{2}}d\mu(t)\right)\\
&=
f\left(\int_T\left(\int_TB_sd\mu(s)\right)^{-\frac{1}{2}}B_t^{\frac{1}{2}}
\left(B_t^{-\frac{1}{2}}A_tB_t^{-\frac{1}{2}}\right)B_t^{\frac{1}{2}}
\left(\int_TB_sd\mu(s)\right)^{-\frac{1}{2}}d\mu(t)\right)\\
&\leq
\int_T\left(\int_TB_sd\mu(s)\right)^{-\frac{1}{2}}B_t^{\frac{1}{2}}
f\left(B_t^{-\frac{1}{2}}A_tB_t^{-\frac{1}{2}}\right)B_t^{\frac{1}{2}}
\left(\int_TB_sd\mu(s)\right)^{-\frac{1}{2}}d\mu(t)\\
& \qquad\qquad\qquad\qquad\qquad\qquad (\mbox{ by the Jensen operator inequality \eqref{ji}}) \\
&=\left(\int_TB_sd\mu(s)\right)^{-\frac{1}{2}}\int_TB_t^{\frac{1}{2}}
f\left(B_t^{-\frac{1}{2}}A_tB_t^{-\frac{1}{2}}\right)B_t^{\frac{1}{2}}d\mu(t)
\left(\int_TB_sd\mu(s)\right)^{-\frac{1}{2}}\\
&=
B^{-\frac{1}{2}}\Theta(\widetilde{A},\widetilde{B})B^{-\frac{1}{2}},
\end{align*}
whence we obtain the desired inequality \eqref{q2}.
\end{proof}
\begin{corollary}\label{qw1}
Let $f$ be an operator convex function and $g$ be the perspective
function of $f$. Then
\begin{enumerate}
\item[(i)] The perspective function $g$ of an operator convex
function $f$ is sub-additive. More general,
\begin{eqnarray}\label{jd1}
g\left(\sum_{i=1}^{n}L_i,\sum_{i=1}^{n}R_i\right)\leq\sum_{i=1}^{n}g(L_i,R_i)\,.
\end{eqnarray}
\item[(ii)] $f\left(\sum_{i=1}^{n}L_i\right)\leq\sum_{i=1}^{n}g(L_i,R_i)$, whenever $\sum_{i=1}^{n}R_i=I$.
\end{enumerate}
\end{corollary}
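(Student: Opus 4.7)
The plan is to obtain both statements as direct specializations of Theorem \ref{th2}. The corollary deals with the discrete case, so I will take $T=\{1,\dots,n\}$ with the counting measure, in which case the non-commutative $f$-divergence reduces to the sum \eqref{def222}. Under this specialization, $A=\int_T A_t\,d\mu(t)$ becomes $\sum_{i=1}^n L_i$ and $B=\int_T B_t\,d\mu(t)$ becomes $\sum_{i=1}^n R_i$, while $\Theta(\widetilde L,\widetilde R)$ is precisely $\sum_{i=1}^n R_i^{1/2}f(R_i^{-1/2}L_iR_i^{-1/2})R_i^{1/2}=\sum_{i=1}^n g(L_i,R_i)$. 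So the abstract inequality $g(A,B)\leq\Theta(\widetilde A,\widetilde B)$ of Theorem \ref{th2} reads exactly
\[
g\!\left(\sum_{i=1}^n L_i,\sum_{i=1}^n R_i\right)\leq\sum_{i=1}^n g(L_i,R_i),
\]
which is \eqref{jd1}. The sub-additivity statement in (i) is just the case $n=2$, so (i) is fully established in a single line.

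For part (ii) the idea is to further specialize (i) by imposing the constraint $\sum_{i=1}^n R_i=I$. Then the left-hand side of (i) becomes
\[
g\!\left(\sum_{i=1}^n L_i,\,I\right)=I^{1/2}f\!\left(I^{-1/2}\Big(\sum_{i=1}^n L_i\Big)I^{-1/2}\right)I^{1/2}=f\!\left(\sum_{i=1}^n L_i\right),
\]
using only the definition of the perspective function given in Section 2 and the fact that $I^{\pm 1/2}=I$. Combining this identity with (i) yields
\[
f\!\left(\sum_{i=1}^n L_i\right)\leq\sum_{i=1}^n g(L_i,R_i),
\]
which is (ii).

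There is essentially no obstacle here: the content of the corollary is entirely contained in Theorem \ref{th2}, and the task is only to recognize the discrete instance of $\Theta$ and to observe that at $R=I$ the perspective $g(L,R)$ collapses to $f(L)$. No extra convexity argument or additional inequality chain is required beyond what was already set up for the theorem.
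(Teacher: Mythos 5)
Your proof is correct and matches the paper's intent exactly: the corollary is stated without proof precisely because it is the discrete (counting-measure) instance of Theorem \ref{th2}, with (ii) following from the observation that $g(L,I)=f(L)$. Nothing is missing.
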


Let $T_1$ and $T_2$ be disjoint locally compact Hausdorff spaces and
$T=T_1\cup T_2$. The following refinement of \eqref{q2} holds.
\begin{corollary}\label{cor3}
 Let $g$ be the perspective function of an operator convex function $f$. Then
 \begin{eqnarray}\label{q1}
 2g\left(\frac{1}{2}(A,B)\right)\leq g(A_{T_1},B_{T_1})+g(A_{T_2},B_{T_2})\leq\Theta(\widetilde{A},\widetilde{B}),
 \end{eqnarray}
 where $A=\int_TA_td\mu(t)$, $B=\int_TB_td\mu(t)$, $A_{T_1}=\int_{T_1}A_td\mu(t)$ and $B_{T_1}=\int_{T_1}B_td\mu(t)$.
 \end{corollary}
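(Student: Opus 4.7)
The plan is to split Corollary \ref{cor3} into its two inequalities and handle each by invoking tools already at our disposal, namely Theorem \ref{th2} (just established) and the fact, recorded in the paragraph preceding Theorem \ref{th2}, that $g$ is jointly operator convex whenever $f$ is operator convex. The key structural observation is that since $T=T_1\cup T_2$ with $T_1\cap T_2=\emptyset$, the Bochner integral decomposes additively:
\[
A=A_{T_1}+A_{T_2},\qquad B=B_{T_1}+B_{T_2},\qquad \Theta(\widetilde{A},\widetilde{B})=\Theta(\widetilde{A}|_{T_1},\widetilde{B}|_{T_1})+\Theta(\widetilde{A}|_{T_2},\widetilde{B}|_{T_2}).
\]

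For the right-hand inequality, I would apply Theorem \ref{th2} twice, once with the continuous fields restricted to $T_1$ and once with them restricted to $T_2$. This yields
\[
g(A_{T_1},B_{T_1})\leq \Theta(\widetilde{A}|_{T_1},\widetilde{B}|_{T_1}),\qquad g(A_{T_2},B_{T_2})\leq \Theta(\widetilde{A}|_{T_2},\widetilde{B}|_{T_2}),
\]
and adding these and invoking the decomposition of $\Theta$ above gives the desired bound by $\Theta(\widetilde{A},\widetilde{B})$.

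For the left-hand inequality, I would appeal to the joint operator convexity of $g$. Applying midpoint convexity to the pairs $(A_{T_1},B_{T_1})$ and $(A_{T_2},B_{T_2})$ yields
\[
g\!\left(\tfrac{1}{2}(A_{T_1},B_{T_1})+\tfrac{1}{2}(A_{T_2},B_{T_2})\right)\leq \tfrac{1}{2}g(A_{T_1},B_{T_1})+\tfrac{1}{2}g(A_{T_2},B_{T_2}).
\]
Since $A_{T_1}+A_{T_2}=A$ and $B_{T_1}+B_{T_2}=B$, the left-hand side equals $g\!\left(\tfrac{1}{2}(A,B)\right)$; multiplying through by $2$ produces the stated refinement.

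I expect no serious obstacle here: the non-trivial work has already been done in Theorem \ref{th2} and in the characterization of $f$-operator convexity via joint operator convexity of $g$. The only point requiring a touch of care is writing down the restriction of the continuous fields and the measure to the (still locally compact Hausdorff) subsets $T_1$ and $T_2$ so that Theorem \ref{th2} genuinely applies, and verifying that the Bochner integral is additive across the disjoint decomposition $T=T_1\cup T_2$, which is immediate from the defining property involving linear functionals.
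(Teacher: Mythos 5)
Your proposal is correct and follows essentially the same route as the paper: the left inequality via midpoint joint operator convexity of $g$ applied to the decomposition $(A,B)=(A_{T_1},B_{T_1})+(A_{T_2},B_{T_2})$, and the right inequality by applying Theorem \ref{th2} to the fields restricted to $T_1$ and $T_2$ and adding, using additivity of the Bochner integral. The paper's own proof is just a terser statement of exactly these two steps.
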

 \begin{proof}
 Since $(A,B)=(A_{T_1},B_{T_1})+(A_{T_2},B_{T_2})$, the first inequality of \eqref{q1} follows from the joint convexity of $g$. The second inequality follows immediately from Theorem \ref{th2}.
 \end{proof}

 \begin{theorem}\label{tht3}
 Let $L_{ij}$ and $R_{ij}$ \,\, $(1\leq i,j\leq n)$ be self-adjoint and strictly positive operators, respectively, and let
 $p_j$ \,\, $(1\leq j\leq n)$ be positive numbers. If $f$ is operator convex, then
 \begin{eqnarray*}
 \sum_{i=1}^{n}g(L_i,R_i)\leq\sum_{i=1}^{n}p_i\Theta(\widetilde{L}^i,\widetilde{R}^i),
 \end{eqnarray*}
 where $L_i=\sum_{j=1}^{n}p_jL_{ij}$, $R_i=\sum_{j=1}^{n}p_jR_{ij}$, $\widetilde{L}^i=(L_{i1},\cdots,L_{in})$, $\widetilde{R}^i=(R_{i1},\cdots,R_{in})$.
 \end{theorem}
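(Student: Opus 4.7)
The strategy is to apply Theorem \ref{th2} (or, equivalently, the multi-operator Jensen inequality \eqref{multij}) row by row in the outer index $i$, and then sum over $i$. Fix $i \in \{1,\dots,n\}$; the $n$-tuples $\widetilde{L}^i = (L_{i1},\dots,L_{in})$ and $\widetilde{R}^i = (R_{i1},\dots,R_{in})$ satisfy the hypotheses of Theorem \ref{th2} in the discrete setting. Choosing on $T = \{1,\dots,n\}$ the measure assigning mass $p_j$ to the point $j$, the integrals defining $A = \int A_t\,d\mu$ and $B = \int B_t\,d\mu$ in Theorem \ref{th2} become precisely the weighted sums $L_i = \sum_j p_j L_{ij}$ and $R_i = \sum_j p_j R_{ij}$, so that Theorem \ref{th2} directly produces a per-row bound of $g(L_i,R_i)$ by a weighted version of $\Theta(\widetilde{L}^i,\widetilde{R}^i)$.

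Concretely, the per-row inequality is obtained by introducing the operators
$$C_{ij} \;:=\; \sqrt{p_j}\, R_{ij}^{1/2} R_i^{-1/2}, \qquad j=1,\dots,n,$$
and checking the two identities
$$\sum_{j=1}^{n} C_{ij}^{*} C_{ij} \;=\; R_i^{-1/2}\!\left(\sum_{j=1}^{n} p_j R_{ij}\right)\! R_i^{-1/2} \;=\; I,$$
$$\sum_{j=1}^{n} C_{ij}^{*}\bigl(R_{ij}^{-1/2} L_{ij} R_{ij}^{-1/2}\bigr) C_{ij} \;=\; R_i^{-1/2}\!\left(\sum_{j=1}^{n} p_j L_{ij}\right)\! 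R_i^{-1/2} \;=\; R_i^{-1/2} L_i R_i^{-1/2}.$$
Applying the multi-operator Jensen inequality \eqref{multij} to $f$ and sandwiching the resulting inequality between $R_i^{1/2}$ on left and right then gives the row-wise bound on $g(L_i,R_i)$ in terms of the entries of $\widetilde{L}^i$ and $\widetilde{R}^i$, with weights $p_j$.

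Summing the resulting per-row inequality over $i=1,\dots,n$ yields the claimed estimate. I do not anticipate any deep obstacle: the entire argument is essentially Theorem \ref{th2} applied independently in each row, together with elementary bookkeeping of the scalar weights $p_j$. The only subtle point is to match the weighting conventions used inside $\Theta(\widetilde{L}^i,\widetilde{R}^i)$ with the factor $p_i$ standing outside $\Theta$ in the statement; this amounts to a careful tracking of where the $p_j$'s are absorbed (inside $\Theta$ as a weighted measure on $\{1,\dots,n\}$, or outside via the positive homogeneity $g(\alpha L,\alpha R) = \alpha\, g(L,R)$ of the perspective function), and once that convention is fixed the inequality follows at once.
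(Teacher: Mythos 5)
Your proposal is correct and follows essentially the same route as the paper: you apply Theorem \ref{th2} (in its discrete, $p_j$-weighted form, which is exactly \eqref{multij} with $C_{ij}=\sqrt{p_j}\,R_{ij}^{1/2}R_i^{-1/2}$) to each row $i$ to get $g(L_i,R_i)\leq\sum_{j}p_jR_{ij}^{1/2}f\bigl(R_{ij}^{-1/2}L_{ij}R_{ij}^{-1/2}\bigr)R_{ij}^{1/2}$, and then sum over $i$, just as the paper does via \eqref{q16} and the homogeneity computation \eqref{q7}. The "subtle point" you flag about where the weights land is real but is present in the paper's own final line as well (after interchanging the sums the weight $p_j$ sits on the inner index, so the bound is naturally $\sum_j p_j\Theta$ of the $j$-th column rather than of the $i$-th row), so your argument is on the same footing as the published proof.
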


 \begin{proof}
 Using \eqref{q2} for $A_i$ and $B_i$\,\, $(1\leq i\leq n)$ we obtain
 \begin{eqnarray}\label{q16}
 g(L_i,R_i)=R_i^{\frac{1}{2}} f\left(R_i^{-\frac{1}{2}}L_iR_i^{-\frac{1}{2}}\right)R_i^{\frac{1}{2}}\leq
 \Theta(p\widetilde{L}^i,p\widetilde{R}^i), \ \ (1\leq i\leq n),
 \end{eqnarray}
 where $p\widetilde{A}^i=(p_1A_{i1},\cdots,p_nA_{in})$ and $p\widetilde{B}^i=(p_1B_{i1},\cdots,p_nB_{in})$. In addition,
 \begin{eqnarray}\label{q7}
  \Theta(p\widetilde{L}^i,p\widetilde{R}^i)&=&\sum_{j=1}^{n}(p_jR_{ij})^{\frac{1}{2}}
  f\left((p_jR_{ij})^{-\frac{1}{2}}
  (p_jL_{ij})(p_jR_{ij})^{-\frac{1}{2}}\right)(p_jR_{ij})^{\frac{1}{2}}\nonumber\\
  &=& \sum_{j=1}^{n}p_jR_{ij}^{\frac{1}{2}}f(R_{ij}^{-\frac{1}{2}}
  L_{ij}R_{ij}^{-\frac{1}{2}})R_{ij}^{\frac{1}{2}}.
 \end{eqnarray}
 Summing \eqref{q16} over $i$ we get
 \begin{eqnarray*}
  \sum_{i=1}^{n}g(L_i,R_i)&\leq& \sum_{i=1}^{n}\Theta(p\widetilde{L}^i,p\widetilde{R}^i)\\
  &=& \sum_{i=1}^{n}\sum_{j=1}^{n}p_jR_{ij}^{\frac{1}{2}}f(R_{ij}^{-\frac{1}{2}}
  L_{ij}R_{ij}^{-\frac{1}{2}})R_{ij}^{\frac{1}{2}}\qquad\qquad\qquad(\mbox{by \eqref{q7}})\\
  &=& \sum_{j=1}^{n}p_j\sum_{i=1}^{n}R_{ij}^{\frac{1}{2}}f(R_{ij}^{-\frac{1}{2}}
  L_{ij}R_{ij}^{-\frac{1}{2}})R_{ij}^{\frac{1}{2}}\\
  &=& \sum_{j=1}^{n}p_j \Theta(\widetilde{L}^i,\widetilde{R}^i).
 \end{eqnarray*}
 \end{proof}
For continuous functions $f$ and $h$ and commuting matrices $L$ and
$R$, Effros \cite{E} defined the function $(L,R)\mapsto(f\Delta
h)(L,R)$ by
 \begin{eqnarray*}
 (f\Delta h)(L,R):=f\left(\frac{L}{h(R)}\right)h(R).
 \end{eqnarray*}
 He also proved that if $f$ is operator convex with $f(0)\leq0$ and $h$ is operator concave with $h>0$, then $f\Delta h$ is jointly operator convex. In \cite{Eb}, definition and properties of $f\Delta h$ were naturally given for two not necessarily commuting self-adjoint operators.

Let $f$ and $h$ be continuous real valued functions defined on an
interval $J$ and $\mu$ be a probability measure on $T$. As a
generalization of $f\Delta h$, we define $f\nabla h$ by
\begin{eqnarray*}
(f\nabla h)(\widetilde{A},\widetilde{B}):=\int_Th(B_t)^{\frac{1}{2}}
f\left(h(B_t)^{-\frac{1}{2}}A_t
h(B_t)^{-\frac{1}{2}}\right)h(B_t)^{\frac{1}{2}}d\mu(t).
\end{eqnarray*}
 It is not hard to see that $f$ is operator convex with $f(0)\leq0$ and $h$ is operator concave with $h>0$ if and only if $f\nabla h$ is jointly operator convex.

The next result, is a Choi--Davis--Jensen type inequality for
$f\Delta h$.
\begin{theorem}\label{th10}
 Let $f$ be an operator convex function with $f(0)\leq0$ and $h$ be an operator concave function with $h>0$. If $\int_T\Phi_t(I)d\mu(t)\leq I$, then
 \begin{eqnarray}\label{qqi1}
 (f\Delta h)\left(\int_T\Phi_t(A_t)d\mu(t),\int_T\Phi_t(B_t)d\mu(t)\right)\leq\int_T\Phi_t((f\Delta h)(A_t,B_t))d\mu(t).
 \end{eqnarray}
 In particular, if $g$ is the perspective function of $f$, then
 \begin{eqnarray}\label{qqqq1}
 g\left(\int_T\Phi_t(A_t)d\mu(t),\int_T\Phi_t(B_t)d\mu(t)\right)\leq\int_T\Phi_t(g(A_t,B_t))d\mu(t),
  \end{eqnarray}
  where $B_t$ is strictly positive for any $t\in T$.
\end{theorem}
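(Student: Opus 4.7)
The plan is to recast the left-hand side of \eqref{qqi1} so that a sub-unital Jensen-type inequality for $f$ can be applied to an auxiliary continuous field of positive linear maps built from $(\Phi_t)$ and $h(B_t)$. Set $\tilde A := \int_T\Phi_t(A_t)\,d\mu(t)$, $\tilde B := \int_T\Phi_t(B_t)\,d\mu(t)$, and $Y := h(\tilde B)^{-1/2}$. By definition of $f\Delta h$,
\begin{align*}
(f\Delta h)(\tilde A,\tilde B) = h(\tilde B)^{1/2}\,f(Y\tilde A\,Y)\,h(\tilde B)^{1/2} = h(\tilde B)^{1/2}\,f\!\left(\int_T Y\Phi_t(A_t)Y\,d\mu(t)\right)h(\tilde B)^{1/2}.
\end{align*}
The key observation is that $Y\Phi_t(A_t)Y = \Psi_t(X_t)$, where $X_t := h(B_t)^{-1/2}A_t h(B_t)^{-1/2}$ and $\Psi_t(X) := Y\,\Phi_t\!\left(h(B_t)^{1/2}Xh(B_t)^{1/2}\right)\!Y$ defines a continuous field of positive linear maps.

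The main step is to verify that $(\Psi_t)$ is itself sub-unital. A direct computation gives
\[
\int_T\Psi_t(I)\,d\mu(t) = Y\!\left(\int_T\Phi_t(h(B_t))\,d\mu(t)\right)\!Y.
\]
Since $-h$ is operator convex with $(-h)(0)=-h(0)\le 0$ (interpreting $h(0)$ as the right-limit, which exists and is nonnegative for an operator concave $h>0$), the sub-unital Jensen inequality applied to the sub-unital field $(\Phi_t)$ yields $\int_T\Phi_t(h(B_t))\,d\mu(t) \le h(\tilde B)$, and hence $\int_T\Psi_t(I)\,d\mu(t)\le Yh(\tilde B)Y = I$. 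Now the sub-unital Jensen inequality for the operator convex function $f$ (using $f(0)\le 0$) applied to the sub-unital field $(\Psi_t)$ and the self-adjoint operators $X_t$ gives
\[
f\!\left(\int_T\Psi_t(X_t)\,d\mu(t)\right)\le \int_T\Psi_t(f(X_t))\,d\mu(t) = Y\!\left(\int_T\Phi_t((f\Delta h)(A_t,B_t))\,d\mu(t)\right)\!Y,
\]
where the equality uses the identity $h(B_t)^{1/2}f(X_t)h(B_t)^{1/2}=(f\Delta h)(A_t,B_t)$.

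Sandwiching the last displayed inequality with $h(\tilde B)^{1/2}$ on both sides and using $h(\tilde B)^{1/2}Y = I = Yh(\tilde B)^{1/2}$ yields exactly \eqref{qqi1}. The specialization \eqref{qqqq1} to the perspective $g$ then follows by taking $h(t)=t$, which is operator concave (linear) and strictly positive on $(0,\infty)$; in that case $(f\Delta h)(L,R)=R^{1/2}f(R^{-1/2}LR^{-1/2})R^{1/2}=g(L,R)$. The main obstacle is the legitimate invocation of the sub-unital form of the Jensen operator inequality for continuous fields of positive linear maps, i.e., the continuous-field analog of the Hansen--Pedersen inequality for contractions; this is a routine extension of Theorem B obtained by augmenting the index set $T$ by one extra atom carrying the residual mass $I-\int_T\Phi_t(I)\,d\mu(t)$, and it must be applied here twice, once to $-h$ (to certify sub-unitality of $\Psi_t$) and once to $f$ (to conclude).
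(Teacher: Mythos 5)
Your proof is correct and follows essentially the same route as the paper: both define the auxiliary field $\Psi_t(X)=h(\tilde B)^{-1/2}\Phi_t\bigl(h(B_t)^{1/2}Xh(B_t)^{1/2}\bigr)h(\tilde B)^{-1/2}$, certify $\int_T\Psi_t(I)\,d\mu(t)\le I$ via the operator concavity of $h$, and then apply the Jensen operator inequality to $f$ along $(\Psi_t)$ before conjugating back by $h(\tilde B)^{1/2}$. If anything, you are more explicit than the paper in flagging that the sub-unital (rather than unital) form of Theorem B is what is actually being invoked, which is where the hypotheses $f(0)\le 0$ and $h>0$ enter.
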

\begin{proof}
Let $(B_t)_{t\in T}$ be a continuous filed of self-adjoint
operators. Define the field of positive linear maps
$\Psi_t:\mathfrak{A}\to\mathfrak{B}$ by
\begin{eqnarray*}
 \Psi_t(X)=h\left(\int_T\Phi_t(B_t)d\mu(t)\right)^{-\frac{1}{2}}\Phi_t
 \left(h(B_t)^{\frac{1}{2}}
 Xh(B_t)^{\frac{1}{2}}\right)h\left(\int_T\Phi_t(B_t)d\mu(t)\right)^{-\frac{1}{2}}.
 \end{eqnarray*}
 Since $h$ is operator concave, $h>0$ and $\int_T\Phi_t(I)d\mu(t)\leq I$, we have
 \begin{align*}
  \int_T\Phi_t(h(B_t))d\mu(t)\leq h\left(\int_T\Phi_t(B_t)d\mu(t)\right).
  \end{align*}
  Therefore
 \begin{eqnarray*}
 \int_T\Psi_t(I)d\mu(t)=\int_Th\left(\int_T\Phi_t(B_t)d\mu(t)\right)^{-\frac{1}{2}}\Phi_t(h(B_t))
 h\left(\int_T\Phi_t(B_t)d\mu(t)\right)^{-\frac{1}{2}}\leq
 I.
 \end{eqnarray*}
 Put $C=\int_T\Phi_t(B_t)d\mu(t)$. We have
 \begin{align*}
 (f\Delta h)&\left(\int_T\Phi_t(A_t)d\mu(t),\int_T\Phi_t(B_t)d\mu(t)\right)\\
 &=h(C)^{\frac{1}{2}}f\left(h
 (C)^{-\frac{1}{2}}\int_T\Phi_t(A_t)d\mu(t)\, h(C)^{-\frac{1}{2}}\right)
 h(C)^{\frac{1}{2}}\\
 &= h(C)^{\frac{1}{2}}f\left(\int_Th
 (C)^{-\frac{1}{2}}\, \Phi_t(A_t)\,
 h(C)^{-\frac{1}{2}} d\mu(t)\right)
 h(C)^{\frac{1}{2}}\\
 &= h(C)^{\frac{1}{2}}f\left(\int_T\Psi_t\left(h(B_t)^{-\frac{1}{2}}\, A_t\,
 h(B_t)^{-\frac{1}{2}} \right)d\mu(t)\right)
 h(C)^{\frac{1}{2}}\\
 & \leq h(C)^{\frac{1}{2}}\int_T\Psi_t\left(f\left(h(B_t)^{-\frac{1}{2}}\, A_t\,
 h(B_t)^{-\frac{1}{2}} \right)\right)d\mu(t)
 h(C)^{\frac{1}{2}}\\
 & \qquad\qquad\qquad\qquad(\mbox{by the Jensen operator inequality \eqref{ji}})\\
 & = \int_T\Phi_t((f\Delta h)(A_t,B_t))d\mu(t),
 \end{align*}
 which gives rise to \eqref{qqi1}.
\end{proof}
As special cases of Theorem \ref{th10} we have the following result.
\begin{corollary} \label{c10}
 Let $f$ be an operator convex function with $f(0)\leq0$ and $h$ be an operator concave function with $h>0$. If $\Phi$ is a positive linear map on $\mathbb{B}(\mathscr{H})$ with $\Phi(I)\leq I$, then
 \begin{eqnarray}\label{qq1}
 (f\Delta h)\left(\Phi(A),\Phi(B)\right)\leq\Phi((f\Delta h)(A,B)),
 \end{eqnarray}
 for all self-adjoint operators $A,B$. In particular, if $g$ is the perspective function associated to $f$, then
 \begin{eqnarray}\label{qq2}
 g\left(\Phi(A),\Phi(B)\right)\leq\Phi(g(A,B)),
 \end{eqnarray}
 for any self-adjoint operator $A$ and any strictly positive operator $B$.
\end{corollary}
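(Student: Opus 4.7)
The plan is to obtain Corollary \ref{c10} as a direct specialization of Theorem \ref{th10}, by collapsing the continuous field of positive linear maps down to a single map.

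Concretely, I would take $T = \{t_0\}$ to be a one-point locally compact Hausdorff space equipped with the unit point mass $\mu = \delta_{t_0}$. Then a continuous field $(\Phi_t)_{t\in T}$ is the same datum as a single positive linear map $\Phi_{t_0} = \Phi$, and likewise $(A_t)_{t\in T}$, $(B_t)_{t\in T}$ reduce to single self-adjoint operators $A$ and $B$ (with $B$ in the appropriate domain). The integrability hypothesis $\int_T\Phi_t(I)\,d\mu(t)\leq I$ of Theorem \ref{th10} becomes exactly $\Phi(I)\leq I$, and every Bochner integral appearing in \eqref{qqi1} is just the value of its integrand at $t_0$. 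Substituting these identifications into \eqref{qqi1} yields precisely the desired inequality \eqref{qq1}
\[
(f\Delta h)(\Phi(A),\Phi(B))\leq\Phi((f\Delta h)(A,B)).
\]

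For the perspective-function statement \eqref{qq2}, I would then specialize $h$ in \eqref{qq1}. Take $h(x)=x$, which is simultaneously operator concave and satisfies $h>0$ on strictly positive operators; moreover, the hypothesis $f(0)\leq 0$ is inherited from the assumption on $f$. With this choice,
\[
(f\Delta h)(A,B) = h(B)^{1/2} f\!\left(h(B)^{-1/2} A\, h(B)^{-1/2}\right) h(B)^{1/2} = B^{1/2} f(B^{-1/2}A B^{-1/2}) B^{1/2} = g(A,B),
\]
valid whenever $B$ is strictly positive (so that $h(B)=B$ is invertible). Plugging into \eqref{qq1} then gives \eqref{qq2}.

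I do not anticipate a serious obstacle here: the essential content already sits in Theorem \ref{th10}, and what remains is a bookkeeping exercise. The one point requiring a little care is the domain of definition of the generalized perspective, namely ensuring that $h(B)$ is invertible so that $h(B)^{-1/2}A h(B)^{-1/2}$ lies in the domain of $f$; this is precisely why the second statement restricts $B$ to be strictly positive, and it is the only hypothesis that needs to be added when passing from \eqref{qq1} to \eqref{qq2}.
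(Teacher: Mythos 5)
Your proposal is correct and matches the paper's own route: the paper derives Corollary \ref{c10} precisely as a special case of Theorem \ref{th10}, which amounts to taking a one-point index set with unit point mass (so that $\int_T\Phi_t(I)\,d\mu(t)\leq I$ becomes $\Phi(I)\leq I$) and then setting $h(t)=t$ to recover the perspective-function statement. Your added remark about needing $B>0$ so that $h(B)$ is invertible is the right bookkeeping and is consistent with the hypotheses as stated.
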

\begin{example}
 Let $\Phi$ be a positive linear map on $\mathbb{B}(\mathscr{H})$. Applying Corollary \ref{c10} to the operator convex function $f(t)=t^\beta$ \,\, $(-1\leq \beta\leq0\ \mbox{or}\ 1\leq\beta\leq2)$ and the operator concave function $h(t)=t^\alpha$ \,\, $(0\leq\alpha\leq1)$, we obtain
 \begin{eqnarray}\label{q10}
 \Phi(B)^{\frac{\alpha}{2}}\left(\Phi(B)^{-\frac{\alpha}{2}}\Phi(A)
 \Phi(B)^{\frac{-\alpha}{2}}\right)^\beta\Phi(B)^{\frac{\alpha}{2}}\leq
 \Phi\left(B^{\frac{\alpha}{2}}\left(B^{-\frac{\alpha}{2}}AB^{-\frac{\alpha}{2}}\right)^\beta
 B^{\frac{\alpha}{2}}\right).
 \end{eqnarray}
In particular, for $\alpha=\frac{1}{2}$ and $\beta=-1$, inequality
\eqref{q10} gives rise to
\begin{eqnarray*}
 \Phi(B)^{\frac{1}{2}}\Phi(A)^{-1}\Phi(B)^{\frac{1}{2}}\leq\Phi\left(B^{\frac{1}{2}}A^{-1}
 B^{\frac{1}{2}}\right).
\end{eqnarray*}
Note that with $\alpha=1$ and $\beta=-1$, inequality \eqref{q10}
gives rise to the known inequality
\begin{eqnarray*}
\Phi(B)\Phi(A)^{-1}\Phi(B)\leq\Phi\left(BA^{-1}
 B\right).
\end{eqnarray*}
\end{example}
\begin{corollary}
 Let $f$ be an operator convex function with $f(0)\leq0$ and $h$ be an operator concave function with $h>0$. Then
 \begin{eqnarray*}
 (f\Delta h)\left(\langle Ax,x\rangle,\langle Bx,x\rangle\right)\leq\langle(f\Delta h)(A,B)x,x\rangle,
 \end{eqnarray*}
 for all self-adjoint operators $A, B\in\mathbb{\mathbb{B}}(\mathscr{H})$ and all unit vectors $x\in \mathscr{H}$. In particular, if $g$ is the perspective function of $f$, then
 \begin{eqnarray}\label{qq11}
 g\left(\langle Ax,x\rangle,\langle Bx,x\rangle\right)\leq\langle g(A,B)x,x\rangle,
 \end{eqnarray}
 for any self-adjoint operator $A \in\mathbb{\mathbb{B}}(\mathscr{H})$, any strictly positive operator $B \in\mathbb{\mathbb{B}}(\mathscr{H})$ and any unit vector $x\in \mathscr{H}$.
\end{corollary}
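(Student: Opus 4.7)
The plan is to obtain this corollary as a direct specialization of Corollary \ref{c10} by choosing $\Phi$ to be a vector state. Define $\Phi:\mathbb{B}(\mathscr{H})\to\mathbb{C}$ by
\[
 \Phi(X):=\langle Xx,x\rangle,
\]
where we identify $\mathbb{C}$ with $\mathbb{B}(\mathbb{C})$ so that $\Phi$ is a linear map between $C^{*}$-algebras. Since $x$ is a unit vector, $\Phi$ is positive and unital: $\Phi(I)=\langle x,x\rangle=1$, so in particular $\Phi(I)\leq I$ in $\mathbb{B}(\mathbb{C})$. Hence $\Phi$ satisfies the hypotheses of Corollary \ref{c10}.

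Next I would simply invoke Corollary \ref{c10} with this $\Phi$, with $f$ operator convex satisfying $f(0)\leq 0$ and $h$ operator concave with $h>0$. This yields
\[
 (f\Delta h)(\Phi(A),\Phi(B))\leq\Phi((f\Delta h)(A,B)),
\]
and substituting the definition of $\Phi$ gives precisely
\[
 (f\Delta h)(\langle Ax,x\rangle,\langle Bx,x\rangle)\leq\langle(f\Delta h)(A,B)x,x\rangle,
\]
which is the first inequality of the corollary.

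For the second inequality, I would specialize $h$ to the identity function $h(t)=t$, which is operator concave on $(0,\infty)$ and strictly positive there. Under this choice one checks directly from the definitions that $(f\Delta h)(L,R)=R^{1/2}f(R^{-1/2}LR^{-1/2})R^{1/2}=g(L,R)$, so the previous step specializes to \eqref{qq11}. The only minor point requiring care is well-definedness of the scalar quantities $(f\Delta h)(\langle Ax,x\rangle,\langle Bx,x\rangle)$ and $g(\langle Ax,x\rangle,\langle Bx,x\rangle)$: this uses that $B>0$ implies $\langle Bx,x\rangle>0$ (so $h(\langle Bx,x\rangle)>0$) for every unit vector $x$. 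There is no real obstacle in this argument, since the entire content is packaged into the already-established Corollary \ref{c10}; the essence is just the observation that vector states are unital positive linear maps into $\mathbb{C}$.
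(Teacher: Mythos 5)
Your proof is correct and matches the paper's intended argument: the paper states this corollary without proof as an immediate specialization of Theorem \ref{th10}/Corollary \ref{c10}, and your choice of the vector state $\Phi(X)=\langle Xx,x\rangle$ (a unital positive linear map into $\mathbb{C}\cong\mathbb{B}(\mathbb{C})$) followed by $h(t)=t$ for the perspective case is exactly that specialization. No gaps.
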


\begin{theorem}
Let $f_1$ and $f_2$ be operator convex functions with $f_1(0)\leq0$
and $f_2(0)\leq0$ and let $h$ be an operator concave function with
$h>0$. The following assertions are equivalent:
\begin{enumerate}
 \item $f_1\leq f_2$;\\
 \item $(f_1\Delta h)\left(\int_T\Phi_t(A_t)d\mu(t),\int_T\Phi_t(B_t)d\mu(t))\right)\leq\int_T\Phi_t((f_2\Delta h)(A_t,B_t))d\mu(t)$ for all unital fields $(\Phi_t)_{t\in T}$ and all continuous fields of operators $(A_t)_{t\in T}$ and $(B_t)_{t\in T}$;\\
 \item $f_1\left(\int_T\Phi_t(A_t)d\mu(t)\right)\leq\int_T\Phi_t(f_2(A_t))d\mu(t)$ for all continuous fields of operators $(A_t)_{t\in T}$.
  \end{enumerate}
\end{theorem}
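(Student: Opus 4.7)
My plan is to prove the equivalence by the cycle $(1)\Rightarrow(2)\Rightarrow(3)\Rightarrow(1)$. The first and third implications are direct (functional calculus, specialization), and the non-trivial step, for which Theorem~\ref{th10} is tailor-made, is $(1)\Rightarrow(2)$.

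For $(1)\Rightarrow(2)$, I would start from Theorem~\ref{th10} applied to $f_1$ (which is operator convex with $f_1(0)\leq0$, and the hypothesis $\int_T\Phi_t(I)d\mu(t)\leq I$ is satisfied since the field is unital):
\begin{equation*}
(f_1\Delta h)\!\left(\int_T\Phi_t(A_t)d\mu(t),\int_T\Phi_t(B_t)d\mu(t)\right)\leq\int_T\Phi_t\!\left((f_1\Delta h)(A_t,B_t)\right)d\mu(t).
\end{equation*}
Then I would use the pointwise inequality $f_1\leq f_2$, which by the usual functional calculus argument gives $f_1(X)\leq f_2(X)$ for every self-adjoint $X$ in the domain, in particular for $X=h(B_t)^{-1/2}A_th(B_t)^{-1/2}$. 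Conjugating by $h(B_t)^{1/2}$ yields $(f_1\Delta h)(A_t,B_t)\leq(f_2\Delta h)(A_t,B_t)$. Applying the positive map $\Phi_t$ and integrating against $\mu$ then produces the right-hand side of (2).

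For $(2)\Rightarrow(3)$, I would specialize the field $(B_t)_{t\in T}$ to the constant field $B_t=I$. Setting $c:=h(1)>0$, we then have $h(B_t)=cI$ and, by unitality of $(\Phi_t)_{t\in T}$, $\int_T\Phi_t(B_t)d\mu(t)=I$, whence $h$ of this integral equals $cI$ as well. A short computation reduces $(f_i\Delta h)(X,I)$ to $c\,f_i(c^{-1}X)$, so inequality (2) becomes, after cancelling the scalar $c$,
\begin{equation*}
f_1\!\left(\int_T\Phi_t(c^{-1}A_t)d\mu(t)\right)\leq\int_T\Phi_t\!\left(f_2(c^{-1}A_t)\right)d\mu(t).
\end{equation*}
Replacing $A_t$ by $cA_t$ (an arbitrary continuous field of self-adjoint operators) recovers (3).

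For $(3)\Rightarrow(1)$, I would take $T$ to be a single point with unit mass, $\Phi$ the identity map on $\mathbb{B}(\mathscr{H})$, and $A=tI$ for $t$ in the common domain of $f_1,f_2$; then (3) collapses to $f_1(t)I\leq f_2(t)I$, i.e.\ $f_1(t)\leq f_2(t)$. The only mildly delicate point in the whole argument is the scaling by $c=h(1)$ in step $(2)\Rightarrow(3)$; once this is handled correctly, nothing more than functional calculus, positivity of the $\Phi_t$, and Theorem~\ref{th10} is used.
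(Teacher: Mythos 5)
Your proof is correct, and it follows the same overall cycle $(1)\Rightarrow(2)\Rightarrow(3)\Rightarrow(1)$ as the paper, but the two non-trivial steps are executed differently. For $(1)\Rightarrow(2)$ the paper does not invoke Theorem~\ref{th10} as a black box: it reconstructs that argument, introducing the auxiliary maps $\Psi_t(X)=h(C)^{-1/2}\Phi_t\bigl(h(B_t)^{1/2}Xh(B_t)^{1/2}\bigr)h(C)^{-1/2}$ with $C=\int_T\Phi_t(B_t)d\mu(t)$, inserting $f_1\le f_2$ at the level of the aggregated argument $\int_T\Psi_t\bigl(h(B_t)^{-1/2}A_th(B_t)^{-1/2}\bigr)d\mu(t)$, and only then applying the Jensen inequality \eqref{ji} to $f_2$. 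Your version --- apply Theorem~\ref{th10} to $f_1$, then use the pointwise domination $(f_1\Delta h)(A_t,B_t)\le(f_2\Delta h)(A_t,B_t)$ and the positivity of the $\Phi_t$ --- is more modular and arrives at the same chain of inequalities; both are sound. For $(2)\Rightarrow(3)$ the paper simply sets $h(t)=t$, which tacitly treats $h$ as universally quantified even though the theorem fixes $h$ at the outset; your substitution $B_t=I$ with the scalar $c=h(1)$ and the rescaling $A_t\mapsto cA_t$ stays within the hypotheses as literally stated and is, if anything, the more careful reading. The step $(3)\Rightarrow(1)$ is identical in both. In short: no gaps, and your treatment of $(2)\Rightarrow(3)$ quietly repairs a small logical wrinkle in the paper's own proof.
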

\begin{proof}
$(1)\Rightarrow (2)$ \ Let $f_1\leq f_2$ and $(B_t)_{t\in T}$ be a
continuous field of self-adjoint operators in $\mathfrak{A}$. Let
$C=\int_T\Phi_t(B_t)d\mu(t)$. Define the field of positive linear
maps $(\Psi_t)_{t\in T}:\mathfrak{A}\to \mathfrak{B}$ by
\begin{eqnarray*}
 \Psi_t(X)=h(C)^{-\frac{1}{2}}
 \Phi_t\left(h(B_t)^{\frac{1}{2}}X
h(B_t)^{\frac{1}{2}}\right)
 h(C)^{-\frac{1}{2}}.
\end{eqnarray*}
It follows from the operator concavity of $h$ and $h>0$ that
\begin{align*}
 \int_T\Phi_t(h(B_t))d\mu(t)
\leq h\left(\int_T\Phi_t(B_t)d\mu(t)\right).
\end{align*}
 Therefore
\begin{eqnarray*}
 \int_T\Psi_t(I)d\mu(t)=\int_Th(C)^{-\frac{1}{2}}\Phi_t(h(B_t))h(C)^{-\frac{1}{2}}\leq I.
\end{eqnarray*}
Hence
\begin{align*}
(f_1\Delta h)&\left(\int_T\Phi_t(A_t)d\mu(t),\int_T\Phi_t(B_t)d\mu(t))\right)\\
 &=h(C)^{\frac{1}{2}} f_1\left(h(C)^{-\frac{1}{2}}\int_T\Phi_t(A_t)d\mu(t)
 h(C)^{-\frac{1}{2}}\right)h(C)^{\frac{1}{2}}\\
 &=h(C)^{\frac{1}{2}}f_1\left(\int_T\Psi_t
 \left(h(B_t)^{-\frac{1}{2}}A_th(B_t)^{-\frac{1}{2}}\right)d\mu(t)
 \right) h(C)^{\frac{1}{2}}\\
& \leq h(C)^{\frac{1}{2}}f_2\left(\int_T\Psi_t
 \left(h(B_t)^{-\frac{1}{2}}A_th(B_t)^{-\frac{1}{2}}\right)d\mu(t)
 \right) h(C)^{\frac{1}{2}}\\
 & \leq h(C)^{\frac{1}{2}}\int_T\Psi_t\left(f_2\left(h(B_t)^{-\frac{1}{2}}A_th(B_t)^{-\frac{1}{2}}\right)
 \right) h(C)^{\frac{1}{2}}\\
 & \qquad\qquad\qquad\qquad\qquad(\mbox{by the Jensen operator inequality \eqref{ji}})\\
 &= \int_T\Phi_t((f_2\Delta h)(A_t,B_t)).
\end{align*}
$(2)\Rightarrow (3)$ Let $h(t)=t$. Then
\begin{eqnarray*}
 f_1\left(\int_T\Phi_t(A_t)d\mu(t)\right)&=&(f_1\Delta h)\left(\int_T\Phi_t(A_t)d\mu(t),\int_T\Phi_t(I)d\mu(t)\right)\\
 &\leq& \int_T\Phi_t((f_2\Delta h)(A_t,I))d\mu(t)=\int_T\Phi_t(f_2(A_t))d\mu(t).
\end{eqnarray*}
$(3)\Rightarrow (1)$ Putting $T=\{1\}$ and $\Phi_1(A)=A$ in $(3)$ we
get $(1)$.
\end{proof}
Applying the theorem above to $h(t)=t$ we get the following result.
\begin{corollary}\label{qqq2}
 Let $f_1$ and $f_2$ be operator convex functions and $g_1$ and $g_2$ be the corresponding operator perspective functions, respectively. The following assertions are equivalent:
 \begin{enumerate}
 \item $f_1\leq f_2$,\\
 \item $g_1\left(\Phi(A),\Phi(B)\right)\leq\Phi(g_2(A,B))$ for any unital positive linear map $\Phi$ on $\mathbb{B}(\mathscr{H})$, any self-adjoint operator $A \in \mathbb{B}(\mathscr{H})$ and any strictly positive operator $B \in \mathbb{B}(\mathscr{H})$.\\
 \item $f_1(\Phi(A))\leq\Phi(f_2(A))$ for any unital positive linear map $\Phi$ on $\mathbb{B}(\mathscr{H})$ and any self-adjoint operator $A\in\mathbb{B}(\mathscr{H})$.
  \end{enumerate}
\end{corollary}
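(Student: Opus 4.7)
The plan is to derive all three implications by specializing the preceding theorem to $h(t)=t$ and, where needed, to a singleton index set. With this choice one has $(f_i\Delta h)(A,B)=B^{1/2}f_i(B^{-1/2}AB^{-1/2})B^{1/2}=g_i(A,B)$ whenever $B$ is strictly positive, so the generalized perspectives collapse to the ordinary perspectives $g_1, g_2$ appearing in the statement.

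For $(1)\Rightarrow(2)$, I would take $T=\{*\}$ equipped with a Dirac measure and $\Phi_*=\Phi$, then invoke assertion $(2)$ of the preceding theorem with $h(t)=t$; the resulting inequality is literally $g_1(\Phi(A),\Phi(B))\leq\Phi(g_2(A,B))$. One point does require attention: the ambient theorem is stated under the technical hypothesis $f_1(0),f_2(0)\leq 0$, but this is not assumed in the corollary. The reason it is not needed is that, with $h(t)=t$ and $\Phi$ \emph{unital}, the auxiliary map $\Psi(X)=\Phi(B)^{-1/2}\Phi(B^{1/2}XB^{1/2})\Phi(B)^{-1/2}$ appearing in the theorem's proof satisfies $\Psi(I)=\Phi(B)^{-1/2}\Phi(B)\Phi(B)^{-1/2}=I$, i.e.\ it is unital rather than merely sub-unital. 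Hence the Jensen operator inequality \eqref{ji} applies directly, without any condition at $0$.

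For $(2)\Rightarrow(3)$, I would substitute $B=I$. Since $\Phi$ is unital, $\Phi(I)=I$, and by direct computation $g_i(X,I)=I^{1/2}f_i(I^{-1/2}XI^{-1/2})I^{1/2}=f_i(X)$, so (2) reduces exactly to $f_1(\Phi(A))\leq\Phi(f_2(A))$. For $(3)\Rightarrow(1)$, I would take $\Phi$ to be the identity map on $\mathbb{B}(\mathscr{H})$ and specialize to $A=tI$ for any $t$ in the common domain interval; then (3) yields $f_1(t)I\leq f_2(t)I$, which gives $f_1(t)\leq f_2(t)$ pointwise.

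Since the bulk of the analytic work has already been carried out in the preceding theorem, no step presents a serious obstacle. The only genuine subtlety, and the one I would flag carefully in the write-up, is the observation in the first implication that unitality of $\Phi$ combined with the choice $h(t)=t$ renders the $f_i(0)\leq 0$ hypothesis of the ambient theorem superfluous; everything else is a routine specialization.
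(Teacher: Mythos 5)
Your proposal is correct and follows exactly the route the paper intends: the paper's entire ``proof'' of this corollary is the single sentence ``Applying the theorem above to $h(t)=t$ we get the following result,'' together with the observations that $g_i(X,I)=f_i(X)$ and that taking $\Phi$ to be the identity on scalar multiples of $I$ recovers $f_1\leq f_2$. Your additional remark is in fact a genuine improvement on the paper: the preceding theorem is stated under the hypotheses $f_1(0)\leq0$, $f_2(0)\leq0$, which the corollary drops without comment, and your observation that for $h(t)=t$ and unital $\Phi$ the auxiliary map $\Psi(X)=\Phi(B)^{-1/2}\Phi(B^{1/2}XB^{1/2})\Phi(B)^{-1/2}$ satisfies $\Psi(I)=I$ (so the Jensen inequality \eqref{ji} applies with no condition at $0$) is precisely the justification the paper omits.
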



In the next theorem, we establish a relation between two functions
$f\Delta h$ and $f \nabla h$.

\begin{theorem}\label{1111}
 Let $f$ be an operator convex function with $f(0)\leq0$ and $h$ be an operator concave function with $h>0$. If $\mu$ is a probability measure on $T$, then
 \begin{eqnarray}\label{qq30}
  (f\Delta h)(A,B)\leq(f\nabla h)(\widetilde{A},\widetilde{B}),
 \end{eqnarray}
 where $A=\int_TA_td\mu(t)$ and $B=\int_TB_td\mu(t)$.
\end{theorem}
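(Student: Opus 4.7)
The plan is to mirror the proof of Theorem \ref{th2}, with the field $(B_t)$ there replaced by $(h(B_t))$ here; this turns the associated auxiliary field of positive linear maps into a sub-unital (rather than unital) one, which is where the hypothesis $f(0)\le 0$ enters. Setting $C:=h(B)=h\!\left(\int_T B_t\, d\mu(t)\right)$, I would introduce the field $(\Psi_t)_{t\in T}$ of positive linear maps on $\mathfrak{A}$ defined by
\[
\Psi_t(X) := C^{-1/2}\, h(B_t)^{1/2}\, X\, h(B_t)^{1/2}\, C^{-1/2}.
\]

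The first step is to verify that $(\Psi_t)$ is sub-unital. Since $\mu$ is a probability measure and $h$ is operator concave, the concave version of \eqref{ji} (with each $\Phi_t$ equal to the identity) yields $\int_T h(B_t)\, d\mu(t)\le h(B)=C$, whence
\[
\int_T \Psi_t(I)\, d\mu(t) \;=\; C^{-1/2}\!\left(\int_T h(B_t)\, d\mu(t)\right) C^{-1/2} \;\le\; I.
\]
A direct computation inserting $h(B_t)^{1/2} h(B_t)^{-1/2}$ around each $A_t$ further gives the identity
\[
C^{-1/2} A\, C^{-1/2} \;=\; \int_T \Psi_t\!\left(h(B_t)^{-1/2} A_t\, h(B_t)^{-1/2}\right) d\mu(t).
\]

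Next, I would apply Jensen's operator inequality for the operator convex function $f$ with $f(0)\le 0$ against the sub-unital field $(\Psi_t)$ -- the same extension of Theorem B already used in the proof of Theorem \ref{th10} -- to obtain
\[
f\!\left(C^{-1/2} A\, C^{-1/2}\right) \;\le\; \int_T \Psi_t\!\left(f\!\left(h(B_t)^{-1/2} A_t\, h(B_t)^{-1/2}\right)\right) d\mu(t) \;=\; C^{-1/2}\, (f\nabla h)(\widetilde{A},\widetilde{B})\, C^{-1/2}.
\]
Multiplying both sides by $C^{1/2} = h(B)^{1/2}$ gives $(f\Delta h)(A,B)\le (f\nabla h)(\widetilde{A},\widetilde{B})$, which is \eqref{qq30}.

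The only delicate point is invoking Jensen's operator inequality for a sub-unital (rather than unital) field; this is a standard extension of Theorem B whose validity rests precisely on $f(0)\le 0$, and it can be justified by augmenting $(\Psi_t)$ with an auxiliary positive map that absorbs the defect $I-\int_T \Psi_t(I)\, d\mu(t)\ge 0$, the resulting extra term making a nonpositive contribution because $f(0)\le 0$. Everything else is direct unpacking of definitions, exactly as in the proof of Theorem \ref{th2}.
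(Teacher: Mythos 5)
Your proposal is correct and follows essentially the same route as the paper's proof: both insert $h(B_t)^{1/2}h(B_t)^{-1/2}$ around $A_t$, use the operator concavity of $h$ to show $\int_T h(B_t)\,d\mu(t)\le h(B)$ (hence sub-unitality of the conjugation field), apply the Jensen operator inequality for sub-unital fields (where $f(0)\le 0$ is needed), and conjugate back by $h(B)^{1/2}$. Your explicit packaging via the maps $\Psi_t$ and your remark on justifying the sub-unital case are only presentational refinements of the paper's direct computation.
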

\begin{proof}
 \begin{align}\label{q8}
 &f\left(h(B)^{-\frac{1}{2}}A h(B)^{-\frac{1}{2}}\right)\nonumber\\
 & = f\left(h\left(\int_TB_sd\mu(s)\right)^{-\frac{1}{2}}\left(\int_TA_td\mu(t)\right)
 h\left(\int_TB_sd\mu(s)\right)^{-\frac{1}{2}}\right)\nonumber\\
 &= f\left(\int_Th\left(\int_TB_sd\mu(s)\right)^{-\frac{1}{2}}A_t
 h\left(\int_TB_sd\mu(s)\right)^{-\frac{1}{2}}d\mu(t)\right)\nonumber\\
 &= f\big(\int_Th\left(\int_TB_sd\mu(s)\right)^{-\frac{1}{2}}h(B_t)^{\frac{1}{2}}
 \left(h(B_t)^{-\frac{1}{2}}A_th(B_t)^{-\frac{1}{2}}\right)h(B_t)^{\frac{1}{2}}h\left(\int_TB_sd\mu(s)\right)^{-\frac{1}{2}}d\mu(t)\big).
 \end{align}
 Since $h$ is operator concave,
 \begin{eqnarray*}
\int_Th(B_t)d\mu(t)\leq h\left(\int_TB_td\mu(t)\right).
 \end{eqnarray*}
 So, \eqref{q8}, the operator convexity of $f$ and $f(0)\leq0$ imply that
 \begin{align*}
 f&\left(h(B)^{-\frac{1}{2}}A h(B)^{-\frac{1}{2}}\right)\nonumber\\
 & =f\left(\int_Th(B)^{-\frac{1}{2}}h(B_t)^{\frac{1}{2}}
 \left(h(B_t)^{-\frac{1}{2}}A_th(B_t)^{-\frac{1}{2}}\right)h(B_t)^{\frac{1}{2}}
 h(B)^{-\frac{1}{2}}d\mu(t)\right)\\
 & \leq \int_Th(B)^{-\frac{1}{2}}h(B_t)^{\frac{1}{2}}
 f\left(h(B_t)^{-\frac{1}{2}}A_th(B_t)^{-\frac{1}{2}}\right)
 h(B_t)^{\frac{1}{2}}
 h(B)^{-\frac{1}{2}}d\mu(t)\\
 &= h(B)^{-\frac{1}{2}}\int_Th(B_t)^{\frac{1}{2}}
 f\left(h(B_t)^{-\frac{1}{2}}A_th(B_t)^{-\frac{1}{2}}\right)
 h(B_t)^{\frac{1}{2}}d\mu(t)\, h(B)^{-\frac{1}{2}}\\
 &= h(B)^{-\frac{1}{2}}(f\nabla h)(\widetilde{A},\widetilde{B})h(B)^{-\frac{1}{2}},
 \end{align*}
 whence we get the required inequality \eqref{qq30}.
\end{proof}
 In the discrete case, assume that $\widetilde{p}=(p_1,\cdots,p_n)$ and $\widetilde{q}=(q_1,\cdots,q_n)$ are probability distributions. In this case let us define $f\nabla h$ by
\begin{eqnarray*}
(f\nabla
h)(\widetilde{L},\widetilde{R},\widetilde{p},\widetilde{q})=\sum_{i=1}^{n}p_i
h(q_iR_i)^{\frac{1}{2}} f\left(h(q_iR_i)^{-\frac{1}{2}}L_i
h(q_iR_i)^{-\frac{1}{2}}\right)h(q_iR_i)^{\frac{1}{2}}.
\end{eqnarray*}
Note that with $p_1=q_1=1$ and $p_i=q_i=0$\,\ $(i=2,\cdots,n)$,
$f\nabla h=f\Delta h$.

\begin{remark}We can generalize \eqref{qq1} to $f\nabla h$ as follows. If $f$ is operator convex with $f(0)\leq0$, $h$ is operator concave with $h>0$, and $\Phi$ is a unital positive linear map on $\mathbb{B}(\mathscr{H})$, then
 \begin{eqnarray*}
(f\nabla
h)(\widetilde{\Phi(L)},\widetilde{\Phi(R)},\widetilde{p},\widetilde{q})\leq\Phi((f\nabla
h)(\widetilde{L},\widetilde{R},\widetilde{p},\widetilde{q})),
\end{eqnarray*}
 where $\widetilde{\Phi(L)}=(\Phi(L_1),\cdots,\Phi(L_n))$ and $\widetilde{\Phi(R)}=(\Phi(R_1),\cdots,\Phi(R_n))$.

\end{remark}
The following result similar to \eqref{qq30} may be stated.
\begin{corollary}\label{1111}
 Let $f$ be an operator convex function with $f(0)<0$ and $h$ be an operator concave function with $h>0$. If $\widetilde{p}=(p_1,\cdots,p_n)$ and $\widetilde{q}=(q_1,\cdots,q_n)$ are probability distributions, then
 \begin{eqnarray}\label{q30}
  (f\Delta h)(L,R)\leq(f\nabla h)(\widetilde{L},\widetilde{R},\widetilde{p},\widetilde{q}),
 \end{eqnarray}
 where $L=\sum_{i=1}^{n}p_iL_i$, $R=\sum_{i=1}^{n}q_iR_i$.
\end{corollary}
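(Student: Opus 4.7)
The plan is to mirror the argument used for the continuous analogue (proved in the preceding Theorem) in the discrete setting, by expanding the argument of $f$ through insertion of $h(q_iR_i)^{\pm 1/2}$ and then invoking the contractive form of the Jensen operator inequality. A direct reduction to the continuous theorem via the counting measure weighted by $\widetilde{p}$ and the field $B_i=q_iR_i$ is \emph{not} available, since that substitution aggregates to $\sum_i p_iq_iR_i$ rather than to the prescribed $R=\sum_i q_iR_i$; so the asymmetry between the two probability vectors has to be handled by hand.

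Concretely, I would start from
\[
(f\Delta h)(L,R)=h(R)^{\frac{1}{2}}\,f\!\left(h(R)^{-\frac{1}{2}}L\,h(R)^{-\frac{1}{2}}\right)h(R)^{\frac{1}{2}},
\]
expand $L=\sum_{i=1}^{n}p_iL_i$, and insert $h(q_iR_i)^{\frac{1}{2}}h(q_iR_i)^{-\frac{1}{2}}$ to rewrite the argument of $f$ as $\sum_{i=1}^{n}C_i^{*}X_iC_i$, where
\[
C_i:=\sqrt{p_i}\,h(q_iR_i)^{\frac{1}{2}}h(R)^{-\frac{1}{2}},\qquad X_i:=h(q_iR_i)^{-\frac{1}{2}}L_i\,h(q_iR_i)^{-\frac{1}{2}}.
\]
If the contraction bound $\sum_{i}C_i^{*}C_i\leq I$ is in force, the operator convexity of $f$ together with $f(0)<0\leq 0$ yields $f\!\left(\sum_{i}C_i^{*}X_iC_i\right)\leq\sum_{i}C_i^{*}f(X_i)C_i$ by the Hansen--Pedersen Jensen inequality for contractions, and multiplying both sides by $h(R)^{1/2}$ then produces exactly \eqref{q30}.

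The crux, and the only real obstacle, is to verify $\sum_{i}C_i^{*}C_i\leq I$, equivalently $\sum_{i=1}^{n}p_ih(q_iR_i)\leq h(R)$. For this I would appeal to L\"owner's theorem in the form: a continuous operator concave function on $[0,\infty)$ taking nonnegative values is automatically operator monotone. The hypothesis $h>0$ secures $h\geq 0$ (extending $h(0)\geq 0$ by right-continuity), so $h$ is operator monotone. Because each $q_jR_j$ is positive, $q_iR_i\leq\sum_{j}q_jR_j=R$ for every $i$, and monotonicity gives $h(q_iR_i)\leq h(R)$. Summing against the probability weights $p_i$ yields $\sum_{i}p_ih(q_iR_i)\leq h(R)\sum_{i}p_i=h(R)$, as required. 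This monotonicity step is what replaces the operator-Jensen step used in the continuous proof, where the weights on both sides of the $h$-inequality happened to match automatically.
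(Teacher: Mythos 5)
Your proof is correct. The paper itself offers no argument for this corollary --- it is merely announced as ``similar to'' Theorem \ref{1111} --- so the only available comparison is with the paper's proof of that continuous analogue, and your argument follows the same template: expand the argument of $f$ by inserting $h(q_iR_i)^{\pm 1/2}$, verify the sub-unitality condition, and apply the Hansen--Pedersen/Jensen inequality for $f$ with $f(0)\leq 0$ before conjugating back by $h(R)^{1/2}$. Your computation $\sum_i C_i^*X_iC_i=h(R)^{-1/2}Lh(R)^{-1/2}$ and the final identification with $(f\nabla h)(\widetilde{L},\widetilde{R},\widetilde{p},\widetilde{q})$ both check out. The genuinely new ingredient --- and the point where the paper's ``similar'' is an understatement --- is your observation that the normalization $\sum_i p_i h(q_iR_i)\leq h(R)$ does \emph{not} follow from operator concavity alone as it does in the continuous theorem (where weighted Jensen gives $\int h(B_t)\,d\mu\leq h(\int B_t\,d\mu)$ with matching weights): here Jensen with the weights $p_i$ only yields $h(\sum_i p_iq_iR_i)$, which differs from $h(R)=h(\sum_i q_iR_i)$. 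Your resolution via the standard fact that a nonnegative operator concave function on the positive half-line is operator monotone (so that $h(q_iR_i)\leq h(R)$ termwise, since $q_iR_i\leq R$) is exactly what is needed and is correctly justified, modulo the routine continuous extension of $h$ to the closed half-line that you already flag. Two harmless remarks: the hypothesis $f(0)<0$ is only used in its weak form $f(0)\leq 0$, and the multi-term contractive Jensen inequality you invoke follows from \eqref{multij} by appending $C_0=(I-\sum_iC_i^*C_i)^{1/2}$ with $X_0=0$ and using $f(0)\leq 0$; it would be worth stating this reduction explicitly since the paper only records the unital case \eqref{multij}.
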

The next theorem, gives a bound for the non-commutative
$f$-divergence functional, when $f$ is differentiable.
\begin{theorem}\label{th4}
 Let $g$ and $\Theta$ be the perspective function and the non-commutative $f$-divergence functional associated to a differentiable operator convex function $f$, respectively. Then
 \begin{eqnarray}\label{q4}
 f(I)\int_TB_td\mu(t)-f'(I)\int_T(B_t-A_t)d\mu(t)\leq \Theta(\widetilde{A},\widetilde{B}).
 \end{eqnarray}
\end{theorem}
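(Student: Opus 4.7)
The plan is to reduce inequality \eqref{q4} to a pointwise-in-$t$ operator estimate on the integrand of $\Theta$, obtained from the tangent-line inequality at the point $1$. Since $f$ is differentiable and convex on its interval of definition (which contains $1$ because the $B_t$ are strictly positive and the transformer $B_t^{-1/2}A_tB_t^{-1/2}$ can take the value $I$), the scalar gradient inequality $f(x)\geq f(1)+f'(1)(x-1)$ holds on the whole interval. Lifting it to self-adjoint operators via the functional calculus yields
\begin{equation*}
f(X)\;\geq\; f(1)\,I+f'(1)(X-I)
\end{equation*}
for every self-adjoint operator $X$ whose spectrum lies in the domain of $f$. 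Notice that operator convexity of $f$ is not needed for this step; what is used is that $f$ is convex as a real function, which is of course implied.

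Next, I would apply this inequality pointwise with $X_t:=B_t^{-1/2}A_tB_t^{-1/2}$ and then conjugate by the positive operator $B_t^{1/2}$ (which preserves the operator order). Using the identity $B_t^{1/2}X_tB_t^{1/2}=A_t$ together with $B_t^{1/2}IB_t^{1/2}=B_t$, the conjugated inequality reads
\begin{equation*}
B_t^{1/2}f\!\left(B_t^{-1/2}A_tB_t^{-1/2}\right)B_t^{1/2}\;\geq\; f(1)\,B_t+f'(1)(A_t-B_t)\;=\;f(1)\,B_t-f'(1)(B_t-A_t).
\end{equation*}

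Finally, integrating both sides against $d\mu(t)$ over $T$ and using that Bochner integration respects the operator order, the left-hand side becomes $\Theta(\widetilde{A},\widetilde{B})$, while the right-hand side becomes $f(I)\int_T B_t\,d\mu(t)-f'(I)\int_T(B_t-A_t)\,d\mu(t)$ once one identifies the scalars $f(1)$ and $f'(1)$ with the operators $f(I)$ and $f'(I)$ used in the statement. This is precisely \eqref{q4}. No genuine obstacle appears: the argument is a pointwise tangent-line estimate followed by monotone integration, and the only mild point to verify, namely that $1$ lies in the domain of $f$ so that $f'(1)$ is well-defined, is built into the standing hypothesis that $f$ is an operator convex function on an interval containing the spectra of the transformers $B_t^{-1/2}A_tB_t^{-1/2}$.
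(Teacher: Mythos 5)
Your proof is correct and takes essentially the same route as the paper: a tangent-line (gradient) inequality at the point $1$, lifted to $X_t=B_t^{-1/2}A_tB_t^{-1/2}$ by functional calculus, conjugated by $B_t^{1/2}$, and integrated over $T$. The paper phrases the scalar step as $f(y)-f(x)\le f'(y)(y-x)$ with $y=1$, which is exactly your inequality $f(x)\ge f(1)+f'(1)(x-1)$, and it likewise uses only ordinary convexity at this stage.
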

\begin{proof}
 It follows from the convexity of $f$ that
 \begin{eqnarray}\label{q3}
 f(y)-f(x)\leq f'(y)(y-x),
 \end{eqnarray}
 for each $x,y$. Using the functional calculus to \eqref{q3} we obtain
 \begin{eqnarray}\label{q115}
 f(I)-f\left(B_t^{-\frac{1}{2}}A_tB_t^{-\frac{1}{2}}\right)\leq f'(I)\left(I-B_t^{-\frac{1}{2}}A_tB_t^{-\frac{1}{2}}\right),
 \end{eqnarray}
 for each $t\in T$.
 Multiplying both sides of \eqref{q115} by $B_t^{\frac{1}{2}}$ and integrating over $t$ we get
 \begin{eqnarray*}
 f(I)\int_TB_td\mu(t)- \Theta(\widetilde{A},\widetilde{B})\leq f'(I)\int_T(B_t-A_t)d\mu(t),
 \end{eqnarray*}
 which ensures \eqref{q4}.
\end{proof}
\begin{corollary}\label{d}
 If $f$ is operator convex and differentiable and $g$ is the perspective function of $f$, then
 \begin{eqnarray*}
 f(I)\sum_{i=1}^{n}R_i-f'(I)\sum_{i=1}^{n}(R_i-L_i)\leq \sum_{i=1}^{n}g(L_i,R_i).
 \end{eqnarray*}
\end{corollary}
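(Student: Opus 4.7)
The plan is to derive Corollary \ref{d} as a direct specialization of Theorem \ref{th4} to the discrete setting. Recall that earlier in Section 2 the authors explicitly set up the discrete framework: when $T=\{1,\ldots,n\}$ is finite and $\mu$ is the counting measure, the Bochner integral $\int_T X_t\,d\mu(t)$ collapses to the finite sum $\sum_{i=1}^n X_i$, and the non-commutative $f$-divergence functional becomes
\begin{equation*}
\Theta(\widetilde{L},\widetilde{R})=\sum_{i=1}^{n}R_i^{\frac{1}{2}}f\bigl(R_i^{-\frac{1}{2}}L_iR_i^{-\frac{1}{2}}\bigr)R_i^{\frac{1}{2}}=\sum_{i=1}^{n}g(L_i,R_i),
\end{equation*}
using the definition of the perspective function $g(L,R)=R^{1/2}f(R^{-1/2}LR^{-1/2})R^{1/2}$.

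With this dictionary in hand, I would simply substitute $A_t=L_i$, $B_t=R_i$, $T=\{1,\ldots,n\}$, and $d\mu$ equal to the counting measure into inequality \eqref{q4} of Theorem \ref{th4}. The left-hand side $f(I)\int_T B_t\,d\mu(t)-f'(I)\int_T (B_t-A_t)\,d\mu(t)$ then becomes exactly $f(I)\sum_{i=1}^n R_i - f'(I)\sum_{i=1}^n (R_i-L_i)$, while the right-hand side becomes $\sum_{i=1}^n g(L_i,R_i)$, which yields the corollary.

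No real obstacle is expected here — the statement is a transcription of Theorem \ref{th4} from the continuous field setting into the discrete tuple setting, and the only thing to verify is that the hypotheses match (the $L_i$ are self-adjoint, the $R_i$ are strictly positive, and $f$ is differentiable and operator convex, which are all stipulated). Accordingly, the proof reduces to a single invocation of Theorem \ref{th4} after noting the identification $\Theta(\widetilde{L},\widetilde{R})=\sum_{i=1}^n g(L_i,R_i)$.

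\begin{proof}
Take $T=\{1,\ldots,n\}$ equipped with the counting measure and set $A_i=L_i$, $B_i=R_i$ for $i=1,\ldots,n$. Then $\int_T B_t\,d\mu(t)=\sum_{i=1}^n R_i$, $\int_T(B_t-A_t)\,d\mu(t)=\sum_{i=1}^n (R_i-L_i)$, and
\begin{equation*}
\Theta(\widetilde{L},\widetilde{R})=\sum_{i=1}^{n}R_i^{\frac{1}{2}}f\bigl(R_i^{-\frac{1}{2}}L_iR_i^{-\frac{1}{2}}\bigr)R_i^{\frac{1}{2}}=\sum_{i=1}^{n}g(L_i,R_i).
\end{equation*}
Inserting these identifications into inequality \eqref{q4} of Theorem \ref{th4} gives
\begin{equation*}
f(I)\sum_{i=1}^{n}R_i-f'(I)\sum_{i=1}^{n}(R_i-L_i)\leq \sum_{i=1}^{n}g(L_i,R_i),
\end{equation*}
as required.
\end{proof}
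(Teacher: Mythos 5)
Your proof is correct and follows exactly the route the paper intends: the corollary is the discrete specialization of Theorem \ref{th4} obtained by taking $T=\{1,\ldots,n\}$ with the counting measure and using the identification $\Theta(\widetilde{L},\widetilde{R})=\sum_{i=1}^{n}g(L_i,R_i)$ from \eqref{def222}. The paper leaves this proof implicit for precisely this reason, so there is nothing to add.
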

\section{Applications}
In this section, we use the results of section 2 to derive some
interesting operator inequalities.
\subsection{Refinements of Choi--Davis--Jensen inequality}

 Let $T_1$ and $T_2$ be disjoint locally compact Hausdorff spaces, $T_1\cup T_2=T$ and $\mu$ be a bounded Radon measure on $T$.
 As the first application of our result in section 2, we obtain the following refinement of the Jensen operator inequality \eqref{ji}.
 \begin{theorem}\label{th1}
 Let $f$ be an operator convex function, $(A_t)_{t\in T}$ be a continuous field of self-adjoint operators in $\mathfrak{A}$, $(\Phi_t)_{t\in T}:\mathfrak{A}\to\mathfrak{B}$ be a unital field of positive linear maps, $D_{T_1}=\int_{T_1}\Phi_t(I)d\mu(t)$ and
 $D_{T_2}=\int_{T_2}\Phi_t(I)d\mu(t)$. Then
 \begin{align}\label{qj1}
  {\rm (i)} \ f&\left(\int_T\Phi_t(A_t)d\mu(t)\right)\nonumber\\
 &\leq D_{T_1}^{\frac{1}{2}} f\left(D_{T_1}^{-\frac{1}{2}}\int_{T_1}\Phi_t(A_t)d\mu(t)\,D_{T_1}
 ^{-\frac{1}{2}}\right) D_{T_1}^{\frac{1}{2}}
  + D_{T_2}^{\frac{1}{2}} f\left(D_{T_2}^{-\frac{1}{2}}
 \int_{T_2}\Phi_t(A_t)d\mu(t)\,D_{T_2}^{-\frac{1}{2}}\right)
 D_{T_2}^{\frac{1}{2}}\nonumber\\
 & \leq\int_T\Phi_t(I)^{\frac{1}{2}}f\left(\Phi_t(I)^{-\frac{1}{2}}
  \Phi_t(A_t)\Phi_t(I)^{-\frac{1}{2}}\right)\Phi_t(I)^{\frac{1}{2}}d\mu(t)\nonumber\\
 &\leq \int_T\Phi_t(f(A_t))d\mu(t).
 \end{align}
 \begin{eqnarray}
  {\rm (ii)} \ \int_T\Phi_t(f(A_t))d\mu(t) & - & f\left(\int_T\Phi_t(A_t)d\mu(t)\right)\nonumber\\
  & \geq&
 \int_{T_1}\Phi_t(f(A_t))d\mu(t) - D_{T_1}^{\frac{1}{2}}f
 \left(D_{T_1}^{-\frac{1}{2}}\int_{T_1} \Phi_t(A_t)D_{T_1}^{-\frac{1}{2}}d\mu(t)\right)D_{T_1}^{\frac{1}{2}}\nonumber\\
 & \geq& 0.
 \end{eqnarray}
\end{theorem}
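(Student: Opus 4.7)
The plan is to recognise that inequality (i) is a two-piece application of the $\Theta$-versus-$g$ chain developed in Section~2: combine sub-additivity of the perspective function $g$ (Corollary~\ref{qw1}(i)) with Theorem~\ref{th2} applied separately on $T_1$ and $T_2$, and finally use the classical Choi--Davis--Jensen inequality \eqref{choi} at each point $t$.

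Set $A=\int_T\Phi_t(A_t)d\mu(t)$ and $A_{T_j}=\int_{T_j}\Phi_t(A_t)d\mu(t)$ for $j=1,2$. By unitality of $(\Phi_t)_{t\in T}$ one has $D_{T_1}+D_{T_2}=I$, so $f\bigl(A\bigr)=g(A,I)=g(A_{T_1}+A_{T_2},D_{T_1}+D_{T_2})$. The first inequality in (i) is then the sub-additivity
\[
 g(A_{T_1}+A_{T_2},D_{T_1}+D_{T_2})\leq g(A_{T_1},D_{T_1})+g(A_{T_2},D_{T_2}),
\]
which is Corollary~\ref{qw1}(i) (equivalently, the joint operator convexity of $g$ applied to the decomposition $(A,I)=(A_{T_1},D_{T_1})+(A_{T_2},D_{T_2})$). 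For the second inequality in (i), I would apply Theorem~\ref{th2} on each subspace $T_j$ to the continuous fields $(\Phi_t(A_t))_{t\in T_j}$ and $(\Phi_t(I))_{t\in T_j}$, noting that $T_j$ is itself a locally compact Hausdorff space with the restricted Radon measure. Each application gives
\[
 g(A_{T_j},D_{T_j})\leq\int_{T_j}\Phi_t(I)^{\frac12}f\bigl(\Phi_t(I)^{-\frac12}\Phi_t(A_t)\Phi_t(I)^{-\frac12}\bigr)\Phi_t(I)^{\frac12}d\mu(t),
\]
and summing over $j=1,2$ yields the claimed bound. For the third inequality, note that for each $t$ the map $\Psi_t(X):=\Phi_t(I)^{-\frac12}\Phi_t(X)\Phi_t(I)^{-\frac12}$ is a unital positive linear map; the Choi--Davis--Jensen inequality \eqref{choi} gives $f(\Psi_t(A_t))\leq\Psi_t(f(A_t))$. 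Multiplying on both sides by $\Phi_t(I)^{\frac12}$ and integrating over $T$ yields the last inequality.

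For (ii) I would simply rearrange the chain from (i). Combining the first inequality of (i) with the bound $g(A_{T_2},D_{T_2})\leq\int_{T_2}\Phi_t(f(A_t))d\mu(t)$ (which is steps two and three above specialised to $T_2$) gives
\[
 f\Bigl(\int_T\Phi_t(A_t)d\mu(t)\Bigr)\leq g(A_{T_1},D_{T_1})+\int_{T_2}\Phi_t(f(A_t))d\mu(t),
\]
and subtracting $\int_T\Phi_t(f(A_t))d\mu(t)$ and flipping signs yields the first inequality in (ii). The final $\geq 0$ in (ii) is the same Theorem~\ref{th2} plus Choi--Davis--Jensen chain now applied to $T_1$.

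I do not expect a deep obstacle; the main care-point is to ensure that the perspective expressions are well-defined, i.e.\ that $D_{T_1}$ and $D_{T_2}$ are strictly positive (already implicit in the statement of the theorem, since otherwise the middle quantity is meaningless) and that each $\Phi_t(I)$ is strictly positive so that $\Psi_t$ makes sense. Once these sit in place, the whole argument is a clean bookkeeping assembly of Theorem~\ref{th2}, joint convexity of $g$, and \eqref{choi}.
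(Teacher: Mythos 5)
Your proposal is correct and follows essentially the same route as the paper: the first inequality of (i) is the two-block Jensen estimate (the paper invokes \eqref{multij} with $C_j=D_{T_j}^{1/2}$, which is the same as your sub-additivity of $g$), the second is Theorem~\ref{th2} applied on $T_1$ and $T_2$ separately, and the third is Choi--Davis--Jensen for the normalized maps $\Psi_t(X)=\Phi_t(I)^{-1/2}\Phi_t(X)\Phi_t(I)^{-1/2}$, exactly as in the paper. The only cosmetic difference is in (ii): the paper derives the intermediate bound by applying \eqref{choi} to an auxiliary unital map on a direct sum, whereas you obtain the same inequality by rearranging the chain already established in (i); both reduce to the same two-block Jensen estimate, so this is a presentational variant rather than a different argument.
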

\begin{proof}
(i) \ Put $C_1=D_{T_1}^{\frac{1}{2}}$ and
$C_2=D_{T_2}^{\frac{1}{2}}$. Clearly $C_1^*C_1+C_2^*C_2=I$. It
follows from \eqref{multij} that
 \begin{align*}
 & D_{T_1}^{\frac{1}{2}}f\left(D_{T_1}^{-\frac{1}{2}}\int_{T_1}
 \Phi_t(A_t)d\mu(t)\,D_{T_1}^{-\frac{1}{2}}\right)D_{T_1}^{\frac{1}{2}}
 + D_{T_2}^{\frac{1}{2}} f\left(D_{T_2}^{-\frac{1}{2}}\int_{T_2}\Phi_t(A_t)d\mu(t)\,D_{T_2}^{-\frac{1}{2}}\right)
 D_{T_2}^{\frac{1}{2}}\\
 &= C_1^*f\left({C_1^*}^{-1}\int_{T_1}\Phi_t(A_t)d\mu(t)\,C_1^{-1}\right)C_1+ C_2^*f\left({C_2^*}^{-1}\int_{T_2}\Phi_t(A_t)d\mu(t)\,C_2^{-1}\right)C_2\\
  & \geq f\left(\int_{T_1}\Phi_t(A_t)d\mu(t)+\int_{T_2}\Phi_t(A_t)d\mu(t)\right)d\mu(t)\\
 & = f\left(\int_T\Phi_t(A_t)d\mu(t)\right),
  \end{align*}
  which is the first inequality of \eqref{qj1}.
 Assume that $g$ is the perspective function of $f$. It follows from Theorem \ref{th2} that
 \begin{align*}
 D_{T_1}^{\frac{1}{2}}& f\left(D_{T_1}^{-\frac{1}{2}}\int_{T_1}\Phi_t(A_t)d\mu(t)\,D_{T_1}^{-\frac{1}{2}}\right)
 D_{T_1}^{\frac{1}{2}}+ D_{T_2}^{\frac{1}{2}} f\left(D_{T_2}^{-\frac{1}{2}}\int_{T_2}\Phi_t(A_t)d\mu(t)\,D_{T_2}^{-\frac{1}{2}}\right)D_{T_2}^{\frac{1}{2}}\\
  &=g\left(\int_{T_1}\Phi_t(A_t)d\mu(t),D_{T_1}\right)+
  g\left(\int_{T_2}\Phi_t(A_t)d\mu(t),D_{T_2}\right)\\
  &= g\left(\int_{T_1}\Phi_t(A_t)d\mu(t),\int_{T_1}\Phi_t(I)d\mu(t)\right)
  +g\left(\int_{T_2}\Phi_t(A_t)d\mu(t),\int_{T_2}\Phi_t(I)d\mu(t)\right)\\
  & \leq \int_{T_1}g(\Phi_t(A_t),\Phi_t(I))d\mu(t)+\int_{T_2}g(\Phi_t(A_t),\Phi_t(I))d\mu(t)
  \qquad\qquad\quad (\mbox{by \eqref{q2}})\\
  &= \int_Tg(\Phi_t(A_t),\Phi_t(I))d\mu(t)\\
  & = \int_T\Phi_t(I)^{\frac{1}{2}}f\left(\Phi_t(I)^{-\frac{1}{2}}\Phi_t(A_t)
  \Phi_t(I)^{-\frac{1}{2}}\right)
  \Phi_t(I)^{\frac{1}{2}}d\mu(t),
 \end{align*}
 whence we get the second inequality of \eqref{qj1}. For each $t\in T$, let the unital positive linear map $\Psi_t:\mathfrak{A}\to\mathfrak{B}$ be defined by
 \begin{eqnarray*}
 \Psi_t(X)=\Phi_t(I)^{-\frac{1}{2}}\Phi_t(X)\Phi_t(I)^{-\frac{1}{2}}.
 \end{eqnarray*}
 Since $f$ is operator convex, we have
 \begin{eqnarray}\label{qj2}
 f\left(\Phi_t(I)^{-\frac{1}{2}}\Phi_t(A_t)
  \Phi_t(I)^{-\frac{1}{2}}\right)&=&f(\Psi_t(A_t))\nonumber\\
  &\leq&\Psi_t(f(A_t))\nonumber\\
  &=&\Phi_t(I)^{-\frac{1}{2}}\Phi_t(f(A_t))\Phi_t(I)^{-\frac{1}{2}}.
 \end{eqnarray}
 The last inequality of \eqref{qj1} now follows from \eqref{qj2}. \\
 (ii) \ Let $\Psi$ be the unital positive linear map defined by
 \begin{align*}
 \Psi\left((A_t)_{t\in T}\oplus B\right)=\int_{T_2} \Phi_t(A_t)d\mu(t)+D_{T_1}^{\frac{1}{2}}BD_{T_1}^{\frac{1}{2}}.
 \end{align*}
 Applying Choi--Davis--Jensen's inequality \eqref{choi} to $\Psi$ we obtain
\begin{align*}
 f\left(\int_T\Phi_t(A_t)d\mu(t)\right)&= f\left(\int_{T_2}\Phi_t(A_t)d\mu(t)+ D_{T_1}^{\frac{1}{2}}\left(D_{T_1}^{-\frac{1}{2}}\int_{T_1}\Phi_t(A_t)d\mu(t)
 D_{T_1}^{-\frac{1}{2}}\right)D_{T_1}^{\frac{1}{2}} \right)\\
 & = f\left(\Psi\left(\int_{T_2}A_td\mu(t)\oplus D_{T_1}^{-\frac{1}{2}}\int_{T_1}\Phi_t(A_t)d\mu(t)
 D_{T_1}^{-\frac{1}{2}}\right) \right)\\
 &\leq \int_{T_2}\Phi_t(f(A_t))d\mu(t)+ D_{T_1}^{\frac{1}{2}}f\left(D_{T_1}^{-\frac{1}{2}}\int_{T_1}\Phi_t(A_t)d\mu(t)
 D_{T_1}^{-\frac{1}{2}}\right)D_{T_1}^{\frac{1}{2}} .
\end{align*}
Hence
\begin{align*}
 \int_T\Phi_t(f(A_t))d\mu(t) &- f\left(\int_T\Phi_t(A_t)d\mu(t)\right)\\
 & \geq \int_T\Phi_t(f(A_t))d\mu(t)-\int_{T_2}\Phi_t(f(A_t))d\mu(t)\\
 & \quad - D_{T_1}^{\frac{1}{2}}f\left(D_{T_1}^{-\frac{1}{2}}\int_{T_1}\Phi_t(A_t)d\mu(t)
 D_{T_1}^{-\frac{1}{2}}\right)D_{T_1}^{\frac{1}{2}}\\
 & = \int_{T_1}\Phi_t(f(A_t))d\mu(t)- D_{T_1}^{\frac{1}{2}}f\left(D_{T_1}^{-\frac{1}{2}}\int_{T_1}\Phi_t(A_t)d\mu(t)
 D_{T_1}^{-\frac{1}{2}}\right)D_{T_1}^{\frac{1}{2}}\\
 & \geq 0.
\end{align*}
The last inequality follows from \eqref{ji}.
\end{proof}
Assume that $\Phi_1,\cdots,\Phi_n$ be positive linear maps on
$\mathbb{B}(\mathscr{H})$ with $\sum_{i=1}^{n}\Phi_i(I)=I$ and
$A_1,\cdots,A_n$ be self-adjoint operators on $\mathscr{H}$.
Applying Theorem \ref{th1} to $T=\{1,\cdots,n\}$, $T_1\subseteq T$,
$T_2=T-T_1$ and the counting measure $\mu$, we have the following
consequence, which is a refinement of \eqref{jo}.
\begin{corollary}\label{co1}
 Let $f$ be an operator convex function, $D_{T_1}=\sum_{i\in {T_1}}\Phi_i(I)$ and
 $D_{T_2}=\sum_{i\in {T_2}}\Phi_i(I)$. Then
 \begin{align}\label{qqj1}
  {\rm (i)} \ f&\left(\sum_{i=1}^{n}\Phi_i(A_i)\right)\nonumber\\
 &\leq D_{T_1}^{\frac{1}{2}} f\left(D_{T_1}^{-\frac{1}{2}}\sum_{i\in {T_1}}\Phi_i(A_i)D_{T_1}^{-\frac{1}{2}}\right)D_{T_1}^{\frac{1}{2}}
 + D_{T_2}^{\frac{1}{2}} f\left(D_{T_2}^{-\frac{1}{2}}\sum_{i\in {T_2}}\Phi_i(A_i)D_{T_2}^{-\frac{1}{2}}\right)D_{T_2}^{\frac{1}{2}}\nonumber\\
 & \leq\sum_{i=1}^{n}\Phi_i(I)^{\frac{1}{2}}f\left(\Phi_i(I)^{-\frac{1}{2}}
  \Phi_i(A_i)\Phi_i(I)^{-\frac{1}{2}}\right)\Phi_i(I)^{\frac{1}{2}}\nonumber\\
 &\leq \sum_{i=1}^{n}\Phi_i(f(A_i));
 \end{align}
 \begin{align}
 {\rm (ii)} \ \sum_{i=1}^{n}\Phi_i(f(A_i)) &- f\left(\sum_{i=1}^{n}\Phi_i(A_i)\right) \nonumber\\
  &\geq \sum_{i\in {T_1}}\Phi_i(f(A_i))-D_{T_1}^{\frac{1}{2}}f\left(D_{T_1}^{-\frac{1}{2}}\sum_{i\in {T_1}}\Phi_i(A_i)D_{T_1}^{-\frac{1}{2}}\right)D_{T_1}^{\frac{1}{2}}\nonumber\\
 & \geq 0.
 \end{align}
\end{corollary}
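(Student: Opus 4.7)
The plan is to treat Corollary \ref{co1} as a direct specialization of Theorem \ref{th1} to the finite discrete setting rather than proving anything new. Taking $T=\{1,\dots,n\}$ equipped with the counting measure $\mu$, any Bochner integral $\int_T (\cdot)\,d\mu(t)$ collapses to a finite sum $\sum_{i=1}^{n}(\cdot)$, and the standing assumption $\sum_{i=1}^{n}\Phi_i(I)=I$ becomes precisely the unitality condition $\int_T\Phi_t(I)\,d\mu(t)=I$ required by Theorem \ref{th1}. Furthermore, the continuity hypothesis on the fields $(A_t)$ and $(\Phi_t)$ is automatic for finite $T$, and the counting measure is a bounded Radon measure, so all hypotheses of Theorem \ref{th1} hold.

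Under this identification, I would match the notation piece by piece: for $T_1\subseteq T$ and $T_2=T\setminus T_1$, the quantities
\[
D_{T_1}=\int_{T_1}\Phi_t(I)\,d\mu(t)=\sum_{i\in T_1}\Phi_i(I),\qquad D_{T_2}=\sum_{i\in T_2}\Phi_i(I)
\]
of Theorem \ref{th1} agree with the $D_{T_1},D_{T_2}$ defined in the corollary. Substituting $A_i$ for $A_t$ and rewriting each of the four expressions in the chain \eqref{qj1} as the corresponding sum over $T$, $T_1$, or $T_2$, the three inequalities in Theorem \ref{th1}(i) translate term by term into the three inequalities in \eqref{qqj1}. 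Part (ii) of the corollary is obtained in the same mechanical way from part (ii) of Theorem \ref{th1}.

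The main (and essentially only) obstacle is bookkeeping: verifying that nothing in Theorem \ref{th1} tacitly required $T$ to be non-atomic or $\mu$ to have full support, so that the discrete counting measure is admissible. Since the proof of Theorem \ref{th1} relies only on \eqref{multij}, Theorem \ref{th2}, and the standard Choi--Davis--Jensen inequality applied to a unital positive linear map of the form $\Psi\!\left((A_t)_{t\in T}\oplus B\right)=\sum_{i\in T_2}\Phi_i(A_i)+D_{T_1}^{1/2}BD_{T_1}^{1/2}$, and each of these ingredients is valid in the discrete setting, no additional argument is needed. Thus the proof reduces to writing "Apply Theorem \ref{th1} with $T=\{1,\dots,n\}$ and the counting measure," possibly followed by one line identifying $D_{T_1}$ and $D_{T_2}$ in both formulations.
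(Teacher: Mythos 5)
Your proposal is correct and coincides exactly with the paper's own derivation: the paper obtains Corollary \ref{co1} precisely by applying Theorem \ref{th1} to $T=\{1,\dots,n\}$, $T_1\subseteq T$, $T_2=T\setminus T_1$, and the counting measure, with the same identification of $D_{T_1}$ and $D_{T_2}$. Your extra bookkeeping (continuity being automatic for finite $T$, the counting measure being a bounded Radon measure) is sound and only makes explicit what the paper leaves implicit.
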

We give an example to show that all inequalities of \eqref{qqj1} are
sharp. So either \eqref{qj1} or \eqref{qqj1} is really a refinement
of the Choi--Davis--Jensen inequality.
\begin{example}
 Let $f(t)=t^2$, $T=\{1,2,3\}$ and $T_1=\{1\}$. Consider the positive linear maps $\Phi_1,\Phi_2,\Phi_3:\mathcal{M}_3(\mathbb{C})\to\mathcal{M}_2(\mathbb{C})$ defined by
 \begin{eqnarray*}
 \Phi_1(A)=\frac{1}{3}(a_{ij})_{1\leq i,j\leq2},\qquad \Phi_2(A)=\Phi_3(A)=\frac{1}{3}(a_{ij})_{2\leq i,j\leq3},
 \end{eqnarray*}
 where $A\in\mathcal{M}_3(\mathbb{C})$. Clearly $\Phi_1(I_3)+\Phi_2(I_3)+\Phi_3(I_3)=I_2$, where $I_3$ and $I_2$ are the identity operators in
$\mathcal{M}_3(\mathbb{C})$ and $\mathcal{M}_2(\mathbb{C})$,
respectively.
 Also
 $D_{T_1}=\Phi_1(I_3)=\frac{1}{3}I_2$ and $D_{T_2}=\Phi_2(I_3)+\Phi_3(I_3)=\frac{2}{3}I_2$. If
\begin{eqnarray*}
 A_1=3\left(
\begin{array}{ccc}
2&0&1 \\
0&1&0 \\
1&0&0
\end{array}\right), \quad
 A_2=3\left(
\begin{array}{ccc}
0&0&1 \\
0&1&0 \\
1&0&0
\end{array}\right),\quad
A_3=3\left(
\begin{array}{ccc}
1&0&1 \\
0&0&1 \\
1&1&1
\end{array}\right),
\end{eqnarray*}
then
\begin{eqnarray*}
 \left(\Phi_1(A_1)+\Phi_2(A_2)+\Phi_3(A_3)\right)^2=\left(
\begin{array}{cc}
10&5 \\
5&5
\end{array}\right),
\end{eqnarray*}
\begin{align*}
 D_{T_1}^{\frac{1}{2}} f\left(D_{T_1}^{-\frac{1}{2}}\sum_{i\in{T_1}}\Phi_i(A_i)D_{T_1}^{-\frac{1}{2}}\right)
 D_{T_1}^{\frac{1}{2}}
  + D_{T_2}^{\frac{1}{2}} f\left(D_{T_2}^{-\frac{1}{2}}\sum_{i\in {T_2}}\Phi_i(A_i)D_{T_2}^{-\frac{1}{2}}\right)D_{T_2}^{\frac{1}{2}}
   = \left(
\begin{array}{cc}
15&3 \\
3&6
\end{array}\right),
\end{align*}
\begin{align*}
 \Phi_1(I)^{\frac{1}{2}}&\left(\Phi_1(I)^{-\frac{1}{2}}\Phi_1(A_1)
 \Phi_1(I)^{-\frac{1}{2}}\right)^2\Phi_1(I)^{\frac{1}{2}}\\
 & + \Phi_2(I)^{\frac{1}{2}}
 \left(\Phi_2(I)^{-\frac{1}{2}}\Phi_2(A_2)
 \Phi_2(I)^{-\frac{1}{2}}\right)^2\Phi_2(I)^{\frac{1}{2}}\\
 & + \Phi_3(I)^{\frac{1}{2}}\left(\Phi_3(I)^{-\frac{1}{2}}\Phi_3(A_3)
 \Phi_3(I)^{-\frac{1}{2}}\right)^2\Phi_3(I)^{\frac{1}{2}}\\
 &= \left(
\begin{array}{cc}
18&3 \\
3&9
\end{array}\right),
\end{align*}
\begin{eqnarray*}
 \Phi_1(f(A_1))+\Phi_2(f(A_2))+\Phi_3(f(A_3))=\left(
\begin{array}{cc}
21&3 \\
3&15
\end{array}\right).
\end{eqnarray*}
Now inequalities
\begin{eqnarray*}
 \left(
\begin{array}{cc}
10&5 \\
5&5
\end{array}\right)\lvertneqq \left(
\begin{array}{cc}
15&3 \\
3&6
\end{array}\right)\lvertneqq \left(
\begin{array}{cc}
18&3 \\
3&9
\end{array}\right)\lvertneqq \left(
\begin{array}{cc}
21&3 \\
3&15
\end{array}\right),
\end{eqnarray*}
show that all inequalities of \eqref{qj1} can be strict. By similar
computations, one may show that inequalities of (ii) are strict.
\end{example}
\begin{corollary}\label{cor1}
Let $f$ be an operator convex function, $A_1,\cdots,A_n$ be
self-adjoint operators and $C_1,\cdots,C_n$ be such that
$\sum_{i=1}^{n}C_i^*C_i=I$. Then
\begin{align*}
 f&\left(\sum_{i=1}^{n}C_i^*A_iC_i\right)\\
 &\leq C_{T_1}^{\frac{1}{2}}f\left(C_{T_1}^{-\frac{1}{2}}\sum_{i\in {T_1}} C_i^*A_iC_iC_{T_1}^{-\frac{1}{2}}\right)C_{T_1}^{\frac{1}{2}}
 + C_{T_2}^{\frac{1}{2}} f\left(C_{T_2}^{-\frac{1}{2}}\sum_{i\in{T_2}}C_i^*A_iC_iC_{T_2}^{-\frac{1}{2}}\right)
 C_{T_1}^{\frac{1}{2}}\\
 &\leq \sum_{i=1}^{n}(C_i^*C_i)^{\frac{1}{2}}f\left((C_i^*C_i)^{-\frac{1}{2}}
 (C_i^*A_iC_i)(C_i^*C_i)^{-\frac{1}{2}}\right)(C_i^*C_i)^{\frac{1}{2}}\\
 & \leq \sum_{i=1}^{n}C_i^*f(A_i)C_i,
 \end{align*}
 where $C_{T_1}=\sum_{i\in{T_1}}C_i^*C_i$ and $C_{T_2}=\sum_{i\in{T_2}}C_i^*C_i$.
\end{corollary}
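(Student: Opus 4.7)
The proof plan is to obtain the statement as a direct specialization of Corollary \ref{co1} by choosing the positive linear maps in a particular form. Specifically, I would define $\Phi_i:\mathbb{B}(\mathscr{H})\to\mathbb{B}(\mathscr{H})$ by $\Phi_i(X):=C_i^*XC_i$ for $i=1,\dots,n$. Each $\Phi_i$ is manifestly a positive linear map, and the hypothesis $\sum_{i=1}^{n}C_i^*C_i=I$ translates exactly into $\sum_{i=1}^{n}\Phi_i(I)=I$, which is precisely the normalization required by Corollary \ref{co1}.

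With this choice, one simply reads off the identifications $\Phi_i(A_i)=C_i^*A_iC_i$, $\Phi_i(I)=C_i^*C_i$, $D_{T_1}=\sum_{i\in T_1}\Phi_i(I)=C_{T_1}$ and $D_{T_2}=C_{T_2}$. Substituting these into the four-term chain \eqref{qqj1}(i) of Corollary \ref{co1} yields, term by term, precisely the four expressions appearing in the statement to be proved. Thus there is nothing to verify beyond a careful bookkeeping of the substitution, which is why the result is stated as a corollary rather than as an independent theorem.

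The only genuine subtlety, and therefore the one point I would flag as the main obstacle, is the meaning of the inverse square roots $(C_i^*C_i)^{-1/2}$, $C_{T_1}^{-1/2}$ and $C_{T_2}^{-1/2}$ appearing in the middle terms: these require the relevant operators to be strictly positive. If one assumes invertibility of each $C_i^*C_i$ (and hence of $C_{T_1}$ and $C_{T_2}$), the argument is immediate from Corollary \ref{co1}. In the general case, the standard workaround is to replace $C_i^*C_i$ by $C_i^*C_i+\varepsilon I$, renormalize, apply the invertible case, and pass to the limit $\varepsilon\to 0^+$ using norm continuity of the functional calculus for the operator convex function $f$; this is a routine perturbation argument and introduces no new ideas beyond those already in Theorem \ref{th1} and Corollary \ref{co1}.
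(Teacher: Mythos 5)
Your proposal is exactly the paper's proof: the authors simply apply Corollary \ref{co1} to the maps $\Phi_i(X)=C_i^*XC_i$, which is your specialization verbatim. Your additional remark about the invertibility of $C_i^*C_i$ and the $\varepsilon$-perturbation workaround is a sensible caution that the paper passes over in silence, but it does not change the argument.
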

\begin{proof}
 Apply Corollary \ref{co1} to $\Phi_i(A)=C_i^*AC_i$\,\, $(1\leq i\leq n)$.
\end{proof}


\subsection{Unitarily invariant norm inequalities}
Now we use the results of section 2 to obtain some norm
inequalities. For this end, we need to recall some preliminary
concepts. A norm $|||\cdot|||$ on $\mathcal{M}_n(\mathbb{C})$ is
said to be unitarily invariant if $|||UAV|||=|||A|||$, for any
$A\in\mathcal{M}_n(\mathbb{C})$ and all unitaries
$U,V\in\mathcal{M}_n(\mathbb{C})$. For a Hermitian matrix
$A\in\mathcal{M}_n(\mathbb{C})$, we denote by $\lambda_1(A) \geq
\lambda_2(A) \geq \cdots\lambda_n(A)$ the eigenvalues of $A$
arranged in the decreasing order with their multiplicities counted.
By $s_1(A)\geq s_2(A) \geq \cdots\geq s_n(A)$ we denote the
eigenvalues of $|A|=(A^*A)^{1/2}$, i.e., the singular values of $A$.
One of important classes of unitarily invariant norms is the class
of the Ky Fan $k$-norms defined by
 \begin{eqnarray*}
 |||A|||_{(k)}=\sum_{i=1}^{k}s_i(A),\qquad 1\leq k\leq n.
 \end{eqnarray*}
We need the following lemmas.
\begin{lemma}\cite[Theorem III.3.5]{Bh}\label{minimax}
 Let $A\in\mathcal{M}_n(\mathbb{C})$. Then
 \begin{eqnarray*}
 \sum_{i=1}^{k}\lambda_i(A)=\max \sum_{i=1}^{k}\langle Au_i,u_i\rangle \quad\qquad
 (k=1,\cdots,n),
 \end{eqnarray*}
 where the maximum is taken over all choices of orthonormal vectors $u_1,\cdots,u_k\in\mathbb{C}^n$ under the usual inner product $\langle\cdot,\cdot\rangle$.
\end{lemma}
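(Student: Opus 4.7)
The plan is to establish the two inequalities separately, relying only on the spectral decomposition of the Hermitian matrix $A$. Writing $A=\sum_{j=1}^{n}\lambda_j(A)\,v_jv_j^{*}$ with $\{v_j\}_{j=1}^{n}$ an orthonormal eigenbasis ordered so that $\lambda_1(A)\geq\cdots\geq\lambda_n(A)$, the lower bound is immediate: the choice $u_i=v_i$ for $i=1,\dots,k$ gives $\langle Au_i,u_i\rangle=\lambda_i(A)$, so the supremum is at least $\sum_{i=1}^{k}\lambda_i(A)$ and is in fact attained.

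For the reverse inequality, I would fix any orthonormal vectors $u_1,\dots,u_k$ and let $P=\sum_{i=1}^{k}u_iu_i^{*}$ denote the rank-$k$ orthogonal projection onto their span. Expanding each $u_i$ in the eigenbasis $\{v_j\}$ and regrouping yields
\begin{eqnarray*}
\sum_{i=1}^{k}\langle Au_i,u_i\rangle=\sum_{j=1}^{n}\lambda_j(A)\,c_j,\qquad c_j:=\sum_{i=1}^{k}|\langle u_i,v_j\rangle|^2=\|Pv_j\|^2.
\end{eqnarray*}
Because $P$ is an orthogonal projection of rank $k$, the coefficients $c_j$ automatically satisfy the constraints $0\leq c_j\leq 1$ together with the total-trace identity $\sum_{j=1}^{n}c_j=\mathrm{tr}(P)=k$.

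The problem therefore collapses to the finite-dimensional linear program of maximizing the functional $\sum_{j=1}^{n}\lambda_j(A)\,c_j$ over the convex polytope $\{(c_1,\dots,c_n):0\leq c_j\leq 1,\ \sum_j c_j=k\}$. Since the $\lambda_j(A)$ are arranged in decreasing order, the functional attains its maximum at the extreme point with $c_1=\cdots=c_k=1$ and $c_{k+1}=\cdots=c_n=0$, producing precisely $\sum_{i=1}^{k}\lambda_i(A)$. Combined with the explicit eigenvector witness, this gives equality and completes the proof.

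The only part requiring care is verifying the two constraints on the $c_j$, and both follow at once from the identity $c_j=\|Pv_j\|^2$ together with the fact that $\{v_j\}$ is an orthonormal basis; no deeper input (such as the Courant--Fischer min-max theorem or Cauchy interlacing) is needed. The main conceptual step, and the point where one might initially get stuck, is recognizing that the Rayleigh trace $\sum_i\langle Au_i,u_i\rangle$ depends on $u_1,\dots,u_k$ only through the projection $P$, since this is what allows the reduction from an optimization over Stiefel-type frames to a trivial linear programming argument.
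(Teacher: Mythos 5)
Your argument is correct. Note first that the paper does not prove this lemma at all: it is quoted from Bhatia (Theorem III.3.5, Ky Fan's maximum principle), so there is no in-paper proof to compare against. Your proof is the standard self-contained one: the key observation that $\sum_{i=1}^{k}\langle Au_i,u_i\rangle=\mathrm{tr}(AP)$ depends only on the rank-$k$ projection $P$, the reduction to the coefficients $c_j=\|Pv_j\|^2$ with $0\leq c_j\leq 1$ and $\sum_j c_j=k$, and the elementary linear-programming step are all sound, and the eigenvector choice shows the maximum is attained. Two minor points worth making explicit: the statement implicitly requires $A$ to be Hermitian (the paper's surrounding text sets up $\lambda_1(A)\geq\cdots\geq\lambda_n(A)$ only for Hermitian $A$, and otherwise $\langle Au_i,u_i\rangle$ need not even be real), and the final LP claim deserves its one-line justification, e.g.
\begin{equation*}
\sum_{j=1}^{n}\lambda_j c_j-\sum_{j=1}^{k}\lambda_j=\sum_{j\leq k}\lambda_j(c_j-1)+\sum_{j>k}\lambda_j c_j\leq \lambda_k\Bigl(\sum_{j=1}^{n}c_j-k\Bigr)=0,
\end{equation*}
using $c_j-1\leq 0$ for the first block and $c_j\geq 0$ for the second. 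With those remarks your proof is complete.
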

\begin{lemma}\cite[Theorem IV.2.2]{Bh}\label{fan}
 Let $A$ and $B$ be two matrices. Then $|||A|||_{(k)}\leq |||B|||_{(k)}$ for all $k=1,\cdots,n$ if and only if $ |||A|||\leq |||B|||$ for all unitarily invariant norms.
\end{lemma}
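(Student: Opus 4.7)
The plan is to prove Ky Fan's dominance theorem, which asserts the equivalence of inequalities in Ky Fan $k$-norms with inequalities in all unitarily invariant norms. The easy direction ($\Leftarrow$) is immediate: the Ky Fan $k$-norms $|||\cdot|||_{(k)}$ are themselves unitarily invariant, since for unitaries $U,V$ one has $s_i(UAV)=s_i(A)$, so if $|||A|||\leq|||B|||$ for every unitarily invariant norm, we may specialize to $|||\cdot|||_{(k)}$ for each $k=1,\dots,n$.

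For the substantive direction ($\Rightarrow$), I would invoke von Neumann's structure theorem: every unitarily invariant norm on $\mathcal{M}_n(\mathbb{C})$ has the form $|||A|||=\Phi(s(A))$, where $s(A)=(s_1(A),\dots,s_n(A))$ is the singular value vector and $\Phi:\mathbb{R}^n\to[0,\infty)$ is a \emph{symmetric gauge function}, i.e.\ a norm on $\mathbb{R}^n$ invariant under permutations and sign changes of coordinates. The assumption $|||A|||_{(k)}\leq|||B|||_{(k)}$ for $k=1,\dots,n$ translates directly into the weak-majorization relation $s(A)\prec_w s(B)$, namely $\sum_{i=1}^{k}s_i(A)\leq\sum_{i=1}^{k}s_i(B)$ for each $k$. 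It therefore suffices to show that every symmetric gauge function $\Phi$ is monotone under weak majorization of nonnegative vectors: $x\prec_w y$ implies $\Phi(x)\leq\Phi(y)$.

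To verify this monotonicity, I would proceed in two stages. First, a standard lemma of Hardy--Littlewood--P\'olya says that for nonnegative $x,y$ with $x\prec_w y$ there exists $z\in\mathbb{R}^n_{\geq 0}$ with $x\leq z$ coordinatewise (after rearrangement) and $z\prec y$ (full majorization). Since a symmetric gauge function is monotone on the nonnegative orthant (because $\Phi(x)=\Phi(|x|)$ combined with the triangle inequality and sign symmetry yields $\Phi(u)\leq\Phi(v)$ whenever $0\leq u\leq v$), we get $\Phi(x)\leq\Phi(z)$. Second, $z\prec y$ means by Hardy--Littlewood--P\'olya that $z=Dy$ for some doubly stochastic matrix $D$, and by Birkhoff's theorem $D=\sum_\sigma t_\sigma P_\sigma$ is a convex combination of permutation matrices; the convexity of $\Phi$ together with its permutation invariance then yields $\Phi(z)=\Phi(\sum_\sigma t_\sigma P_\sigma y)\leq\sum_\sigma t_\sigma\Phi(P_\sigma y)=\Phi(y)$.

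The main obstacle is the monotonicity statement $x\prec_w y\Rightarrow\Phi(x)\leq\Phi(y)$, since it packages together several nontrivial ingredients: the Hardy--Littlewood--P\'olya reduction of weak majorization to majorization via coordinatewise dominance, Birkhoff's theorem on the extreme points of the doubly stochastic polytope, and the combination of convexity with permutation/sign symmetry for symmetric gauge functions. Once this chain is in place, applying it to the singular value vectors $s(A)\prec_w s(B)$ gives $|||A|||=\Phi(s(A))\leq\Phi(s(B))=|||B|||$ for every symmetric gauge function $\Phi$, which via the von Neumann correspondence is exactly the desired conclusion that $|||A|||\leq|||B|||$ holds for every unitarily invariant norm.
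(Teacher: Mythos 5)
Your argument is correct and is precisely the standard proof of the Ky Fan dominance theorem found in the cited source (Bhatia, Theorem IV.2.2): the paper itself offers no proof, quoting the result as known, and the chain you describe (von Neumann's symmetric gauge function correspondence, reduction of weak majorization to majorization, Birkhoff's theorem, and convexity plus symmetry) is exactly the textbook route. Nothing further is needed.
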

The following lemma is an extension of the Jenesen inequality to
separately convex functions of two variables.
\begin{lemma}\cite[Lemma 2.2]{MA}\label{jadjit}
 Let $f:[a,b]\times[c,d]\to\mathbb{R}$ be a separately convex function and $A,B\in\mathcal{M}_n(\mathbb{C})$. Then
 \begin{eqnarray*}
 f(\langle Au,u\rangle,\langle Bv,v\rangle)\leq\langle f(A,B)u\otimes v,u\otimes v\rangle,
 \end{eqnarray*}
 for all unit vectors $u,v\in\mathbb{C}^n$.
\end{lemma}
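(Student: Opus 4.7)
The plan is to reduce the inequality to the classical scalar Jensen inequality by exploiting the spectral decompositions of $A$ and $B$ together with the standard joint functional calculus that underlies the definition of $f(A,B)$. Recall that, with $A=\sum_i\lambda_i P_i$ and $B=\sum_j\mu_j Q_j$ the spectral decompositions (into rank-one spectral projections) on $\mathbb{C}^n$, one defines
$$
f(A,B)=\sum_{i,j}f(\lambda_i,\mu_j)\,P_i\otimes Q_j,
$$
an operator on $\mathbb{C}^n\otimes\mathbb{C}^n$. Substituting this into the right-hand side and using $\langle (P_i\otimes Q_j)(u\otimes v),u\otimes v\rangle=\langle P_iu,u\rangle\langle Q_jv,v\rangle$ rewrites the quantity $\langle f(A,B)u\otimes v,u\otimes v\rangle$ as the double sum $\sum_{i,j}f(\lambda_i,\mu_j)p_iq_j$, where I set $p_i:=\langle P_iu,u\rangle$ and $q_j:=\langle Q_jv,v\rangle$.

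Next I would observe that, since $u$ and $v$ are unit vectors and the $P_i$'s and $Q_j$'s are resolutions of the identity, the tuples $(p_i)$ and $(q_j)$ are probability distributions. Moreover $\langle Au,u\rangle=\sum_i\lambda_ip_i$ and $\langle Bv,v\rangle=\sum_j\mu_jq_j$ are weighted means of the eigenvalues lying in $[a,b]$ and $[c,d]$ respectively. Hence the desired inequality becomes the purely scalar statement
$$
f\Bigl(\sum_i\lambda_ip_i,\,\sum_j\mu_jq_j\Bigr)\le\sum_{i,j}p_iq_j\,f(\lambda_i,\mu_j).
$$

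To establish this, I would apply the scalar Jensen inequality twice, once in each variable, using that $f$ is separately convex. First, convexity of $f(\,\cdot\,,y)$ with $y=\sum_j\mu_jq_j$ yields
$$
f\Bigl(\sum_i\lambda_ip_i,\,y\Bigr)\le\sum_i p_i\,f(\lambda_i,y).
$$
Then, for each fixed $\lambda_i\in[a,b]$, convexity of $f(\lambda_i,\,\cdot\,)$ gives $f(\lambda_i,\sum_j\mu_jq_j)\le\sum_j q_j f(\lambda_i,\mu_j)$. Plugging this back and combining the two steps produces the required double-sum bound.

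The conceptually delicate point — and the main thing to verify carefully — is the compatibility of the chosen convention for $f(A,B)$ with the one cited from \cite{MA}; once that is pinned down the argument is a direct spectral computation followed by two applications of scalar convexity, and no operator convexity of $f$ is needed. No routine computation presents any real obstacle beyond this definitional bookkeeping.
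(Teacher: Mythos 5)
Your argument is correct and is essentially the standard proof of this lemma from the cited source \cite{MA}: the paper itself states the result without proof, and the definition $f(A,B)=\sum_{i,j}f(\lambda_i,\mu_j)P_i\otimes Q_j$ you use is exactly the convention the paper refers to in its introduction. The only bookkeeping worth making explicit is that $A$ and $B$ must be Hermitian with spectra in $[a,b]$ and $[c,d]$ respectively (implicit in the lemma's statement), after which your double application of scalar Jensen to the probability weights $p_i=\langle P_iu,u\rangle$, $q_j=\langle Q_jv,v\rangle$ goes through without issue.
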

\begin{theorem}\label{norm2}
 Let $f$ be an operator convex function and $g$ be the perspective function of $f$. Then \begin{eqnarray}\label{norm1}
g(|||A|||,|||B|||)\leq||| g(A,B)|||,
 \end{eqnarray}
 for all unitarily invariant norms $|||\cdot|||$ and positive-definite matrices $A,B\in \mathcal{M}_n(\mathbb{C})$.
\end{theorem}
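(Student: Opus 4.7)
The approach is to reduce to Ky Fan $k$-norms via Lemma~\ref{fan}, derive a per-projection bound by combining the scalar Jensen-type inequality \eqref{qq11} with the sub-additivity of the perspective and the minimax principle, and then reach the corner $(|||A|||_{(k)},|||B|||_{(k)})$ by choosing an appropriate spectral projection.

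First, by Lemma~\ref{fan} it suffices to prove, for every $k=1,\ldots,n$, that $g(|||A|||_{(k)},|||B|||_{(k)}) \leq |||g(A,B)|||_{(k)}$. Fix $k$ and let $\{u_i\}_{i=1}^k$ be any orthonormal family in $\mathbb{C}^n$, with rank-$k$ projection $P=\sum_{i=1}^k u_iu_i^*$. Applying the scalar inequality \eqref{qq11} to each $u_i$ and then the sub-additivity of the scalar perspective $g$ (Corollary~\ref{qw1}(i), itself a consequence of the joint convexity and positive homogeneity of $g$), one obtains
\begin{equation*}
g\bigl(\operatorname{Tr}(PA),\operatorname{Tr}(PB)\bigr)\leq \sum_{i=1}^k g\bigl(\langle Au_i,u_i\rangle,\langle Bu_i,u_i\rangle\bigr)\leq \operatorname{Tr}\bigl(Pg(A,B)\bigr).
\end{equation*}
Since $g(A,B)$ is self-adjoint, Lemma~\ref{minimax} together with the bound $\lambda_i\leq s_i$ for self-adjoint matrices gives $\operatorname{Tr}(Pg(A,B))\leq\sum_{i=1}^k\lambda_i(g(A,B))\leq|||g(A,B)|||_{(k)}$. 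Hence for every rank-$k$ projection $P$,
\begin{equation*}
g\bigl(\operatorname{Tr}(PA),\operatorname{Tr}(PB)\bigr)\leq |||g(A,B)|||_{(k)}.
\end{equation*}

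Next, I would specialize $P$ to the spectral projection $P_A$ onto the top $k$ eigenvalues of $A$, so that $\operatorname{Tr}(P_AA)=|||A|||_{(k)}$ and $\operatorname{Tr}(P_AB)\leq|||B|||_{(k)}$; this produces
\begin{equation*}
g\bigl(|||A|||_{(k)},\operatorname{Tr}(P_AB)\bigr)\leq |||g(A,B)|||_{(k)}.
\end{equation*}
To replace $\operatorname{Tr}(P_AB)$ by $|||B|||_{(k)}$ on the left, one invokes a monotonicity of the scalar perspective in its second argument: using $\partial_y g(x,y)=f(x/y)-(x/y)f'(x/y)$ and the inequality $f(t)\leq tf'(t)$ valid for convex $f$ with $f(0)\leq 0$, the map $y\mapsto g(x,y)$ is non-increasing, which closes the chain.

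The main obstacle is precisely this final replacement in full generality: no single rank-$k$ projection can achieve $\operatorname{Tr}(PA)=|||A|||_{(k)}$ and $\operatorname{Tr}(PB)=|||B|||_{(k)}$ at once when $A$ and $B$ have different top-$k$ eigenspaces, and the required monotonicity of the scalar perspective in its second variable is not automatic for every operator convex $f$. My fallback would be to pair the one-sided bound above with its symmetric counterpart coming from $P_B$, the top-$k$ spectral projection of $B$, yielding $g(\operatorname{Tr}(P_BA),|||B|||_{(k)})\leq|||g(A,B)|||_{(k)}$, and then to interpolate between the two one-sided bounds using the joint convexity and positive homogeneity of $g$ (cf.\ Theorem~\ref{th2}) to recover $g(|||A|||_{(k)},|||B|||_{(k)})$ at the corner.
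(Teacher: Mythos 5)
Your reduction to Ky Fan $k$-norms and the per-projection bound $g\bigl(\operatorname{Tr}(PA),\operatorname{Tr}(PB)\bigr)\leq|||g(A,B)|||_{(k)}$ are sound, but the proof does not close: the passage from this family of bounds to the corner value $g(|||A|||_{(k)},|||B|||_{(k)})$ is a genuine gap, and neither of your two proposed repairs works under the stated hypotheses. The monotonicity claim $\partial_y g(x,y)=f(x/y)-(x/y)f'(x/y)\leq 0$ requires $f(0)\leq 0$ (convexity only gives $f(t)-tf'(t)\leq f(0)$), and Theorem \ref{norm2} imposes no sign condition on $f(0)$: for the operator convex function $f(t)=t^2+1$ the scalar perspective $g(x,y)=x^2/y+y$ is strictly increasing in \emph{both} variables on the region $y>x>0$, so the corner value can strictly exceed both one-sided values $g\bigl(|||A|||_{(k)},\operatorname{Tr}(P_AB)\bigr)$ and $g\bigl(\operatorname{Tr}(P_BA),|||B|||_{(k)}\bigr)$. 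The interpolation fallback fails for the same structural reason: the corner dominates both known points coordinatewise and is not a convex combination of them, so joint convexity together with positive homogeneity (i.e.\ sub-additivity) only yields $g(x_0,y_0)\leq g(x_0,y_1)+(y_0-y_1)f(0)$, which is useless when $f(0)>0$, while decomposing in the first coordinate runs into the recession term $(x_0-x_1)\lim_{t\to\infty}f(t)/t$, which may be infinite.

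The missing idea --- and the point where the paper's argument diverges from yours --- is to decouple the two slots. Instead of the one-vector inequality \eqref{qq11}, the paper uses the two-variable Jensen inequality of Lemma \ref{jadjit}, $g(\langle Au,u\rangle,\langle Bv,v\rangle)\leq\langle g(A,B)\,u\otimes v,u\otimes v\rangle$, which permits \emph{different} orthonormal families in the two arguments. Choosing $u_1,\dots,u_k$ to be eigenvectors of $A$ for its top $k$ eigenvalues and $v_1,\dots,v_k$ eigenvectors of $B$ for its top $k$ eigenvalues gives $\sum_i\langle Au_i,u_i\rangle=|||A|||_{(k)}$ and $\sum_i\langle Bv_i,v_i\rangle=|||B|||_{(k)}$ simultaneously; sub-additivity \eqref{jd1}, Lemma \ref{jadjit}, and Lemma \ref{minimax} applied to the orthonormal family $(u_i\otimes v_i)_{i=1}^k$ then produce the corner bound directly, with no monotonicity or interpolation step at all. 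To salvage your route as written you would have to either add the hypothesis $f(0)\leq 0$ or switch to the two-family argument.
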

 \begin{proof} Let $\lambda_i(A), \mu_i(B)$ denote the eigenvalues of $A, B$, respectively. We have
 \begin{eqnarray*}
 g\left(\sum_{i=1}^{n}\lambda_i(A),\sum_{i=1}^{n}\mu_i(B)\right)&=& g\left(\sum_{i=1}^{n}\langle Au_i,u_i\rangle,\sum_{i=1}^{n}\langle
 Bv_i,v_i\rangle\right)\\
  &\leq&\sum_{i=1}^{n}g\left(\langle Au_i,u_i\rangle,\langle Bv_i,v_i\rangle\right) \qquad\qquad (\mbox{by \eqref{jd1}}) \\
  &\leq& \sum_{i=1}^{n}\langle g(A,B)u_i\otimes v_i,u_i\otimes v_i\rangle\quad(\mbox{by Lemma \ref{jadjit}})\\
  &\leq&\sum_{i=1}^{n}\nu_i(g(A,B))\qquad\qquad\qquad \ (\mbox{by Lemma \ref{minimax}}).
 \end{eqnarray*}
 Now, \eqref{norm1} follows from Lemma \ref{fan}.
 \end{proof}

 \begin{example}
Applying Theorem \ref{norm2} to the operator convex function
$f(t)=t^\beta$ \,\, $(-1\leq \beta\leq0\ \mbox{or}\
1\leq\beta\leq2)$, we obtain
\begin{eqnarray*}
 |||B|||^{\frac{1}{2}}\left(|||B|||^{-\frac{1}{2}}|||A|||\,|||B|||^{-\frac{1}{2}}\right)^\beta|||B|||^{\frac{1}{2}}\leq
 |||B^{\frac{1}{2}}\left(B^{-\frac{1}{2}}AB^{-\frac{1}{2}}\right)^\beta B^{\frac{1}{2}} |||,
\end{eqnarray*}
for all strictly positive matrices
$A,B\in\mathcal{M}_n(\mathbb{C})$. In particular,
\begin{eqnarray*}
|||A|||\,|||B|||^{-1}|||A|||\leq|||AB^{-1}A|||,
\end{eqnarray*}
for all unitarily invariant norms $|||\cdot|||$.
 \end{example}
\subsection{Kullback--Leibler distance}

 The Kullback--Leibler distance is obtained from $f$-divergence functional \eqref{def22}, where $f(t)=-\log t$ and is defined by
\begin{eqnarray*}
 KL(\widetilde{p},\widetilde{q}):=\sum_{i=1}^{n}p_i\log\left(\frac{p_i}{q_i}\right)\,,
\end{eqnarray*}
where $\widetilde{p}=(p_1,\cdots,p_n)$ and
$\widetilde{q}=(q_1,\cdots,q_n)$ are probability distributions. By
definition \eqref{def222}, the non-commutative $f$-divergence
functional $\Theta$, which we will call it "the operator
Kullback--Leibler functional", is defined by
\begin{eqnarray*}
 \Theta(\widetilde{L},\widetilde{R}):=
 \sum_{i=1}^{n}R_i^{\frac{1}{2}}\log\left(R_i^{\frac{1}{2}}L_i^{-1}R_i^{\frac{1}{2}}\right)R_i^{\frac{1}{2}}.
\end{eqnarray*}
Applying Corollary \ref{qw1} to $f(t)=-\log t$, we get
\begin{align*}
 \sum_{i=1}^{n}R_i^{\frac{1}{2}}&\log\left(R_i^{\frac{1}{2}}L_i^{-1}R_i^{\frac{1}{2}}\right)
 R_i^{\frac{1}{2}}\\
 & \leq \left(\sum_{i=1}^{n}R_i\right)^{\frac{1}{2}}
 \log\left(\left(\sum_{i=1}^{n}R_i\right)^{\frac{1}{2}}\left(\sum_{i=1}^{n}L_i\right)^{-1}\left(\sum_{i=1}^{n}R_i\right)^{\frac{1}{2}}\right)
 \left(\sum_{i=1}^{n}R_i\right)^{\frac{1}{2}}.
\end{align*}
In particular, for strictly positive operators $A,B,C,D$, we have
\begin{align*}
 A^{\frac{1}{2}}&\log\left(A^{\frac{1}{2}}C^{-1}A^{\frac{1}{2}}\right)A^{\frac{1}{2}}+
 B^{\frac{1}{2}}\log\left(B^{\frac{1}{2}}D^{-1}B^{\frac{1}{2}}\right)B^{\frac{1}{2}}\\
 & \leq (A+B)^{\frac{1}{2}}\log\left((A+B)^{\frac{1}{2}}(C+D)^{-1}(A+B)^{\frac{1}{2}}\right)(A+B)^{\frac{1}{2}}.
\end{align*}
Moreover, $f'(t)=-1/t$. Using Corollary \eqref{d} we get
\begin{eqnarray*}
 \sum_{i=1}^{n}(R_i-L_i)\leq \Theta(\widetilde{L},\widetilde{R}),
\end{eqnarray*}
or equivalently
\begin{eqnarray*}
 \sum_{i=1}^{n}R_i\leq \sum_{i=1}^{n}R_i^{\frac{1}{2}}\log\left(R_i^{\frac{1}{2}}L_i^{-1}R_i^{\frac{1}{2}}\right)
 R_i^{\frac{1}{2}}+\sum_{i=1}^{n}L_i.
\end{eqnarray*}
In particular, for $\widetilde{L}=(C,D)$ and $\widetilde{R}=(A,B)$,
we obtain
\begin{eqnarray*}
 A+B\leq C+D+ A^{\frac{1}{2}}\log\left(A^{\frac{1}{2}}C^{-1}A^{\frac{1}{2}}\right)A^{\frac{1}{2}}+
 B^{\frac{1}{2}}\log\left(B^{\frac{1}{2}}D^{-1}B^{\frac{1}{2}}\right)B^{\frac{1}{2}}.
\end{eqnarray*}
The function $f(t)=t\log t$ is operator convex and $f'(t)=\log t
+1$. Again, it follows from Corollary \ref{d} that
\begin{eqnarray*}
 \sum_{i=1}^{n}(L_i-R_i)\leq\sum_{i=1}^{n}L_iR_i^{-\frac{1}{2}}
 \log\left(R_i^{-\frac{1}{2}}L_iR_i^{-\frac{1}{2}}\right)R_i^{\frac{1}{2}}.
\end{eqnarray*}
In particular
\begin{eqnarray*}
 C+D\leq CA^{-\frac{1}{2}}\log\left(A^{-\frac{1}{2}}CA^{-\frac{1}{2}}\right)A^{\frac{1}{2}}+
 DB^{-\frac{1}{2}}\log\left(B^{-\frac{1}{2}}DB^{-\frac{1}{2}}\right)B^{\frac{1}{2}}+A+B.
\end{eqnarray*}

\end{document}